 \newtheorem{thm}{Theorem}[section]
 \newtheorem{cor}[thm]{Corollary}
 \newtheorem{lem}[thm]{Lemma}
 \newtheorem{prop}[thm]{Proposition}
 \theoremstyle{definition}
 \newtheorem{defn}[thm]{Definition}
 \theoremstyle{remark}
 \newtheorem{rem}[thm]{Remark}
 \newtheorem{ex}[thm]{Example}
 \numberwithin{equation}{section}
\newcommand{\myurl}[1]{\href{#1}{#1}}
\newcommand{\bC}{\mathbb{C}}
\newcommand{\bD}{\mathbb{D}}
\newcommand{\bR}{\mathbb{R}}
\newcommand{\bN}{\mathbb{N}}
\newcommand{\bNz}{\mathbb{N}_0}
\newcommand{\bT}{\mathbb{T}}
\newcommand{\bZ}{\mathbb{Z}}
\newcommand{\cB}{\mathcal{B}}
\newcommand{\cC}{\mathcal{C}}
\newcommand{\al}{\alpha}
\newcommand{\be}{\beta}
\newcommand{\ga}{\gamma}
\newcommand{\ka}{\varkappa}
\newcommand{\la}{\lambda}
\newcommand{\Tht}{\Theta}
\newcommand{\tht}{\vartheta}
\renewcommand{\phi}{\varphi}
\newcommand{\enumber}{\operatorname{e}}
\newcommand{\iu}{\operatorname{i}}
\newcommand{\dif}{\mathrm{d}}
\newcommand{\Ber}{\operatorname{Ber}}
\newcommand{\eqdef}{\coloneqq}
\begin{document}

\title{Toeplitz operators in Bergman space\\
induced by radial measures}

\author{Egor A. Maximenko, Carlos G. Pacheco}

\maketitle

\begin{center}
We dedicate this article in the memory to Prof. Nikolai Vasilevski,\\
good colleague and friend.
\end{center}

\begin{abstract}
We study radial Carleson--Bergman measures
on the unit disk
and the corresponding
Toeplitz operators acting in the Bergman space.
First, we show that such Toeplitz operators are diagonal in the canonical basis, and we compute their eigenvalue sequences and Berezin transforms
in terms of the radial component of the measure.
Next, considering the average values of radial measures near the boundary, we give a simple characterization of radial Carleson--Bergman measures.
Finally, we prove that the eigenvalue sequences of such Toeplitz operators are Lipschitz continuous with respect to the logarithmic distance on natural numbers.
As a consequence, we describe the commutative C*-algebra generated by Toeplitz operators induced by radial Carleson--Bergman measures.

\bigskip\noindent
\textbf{MSC:}
Primary 47B35; Secondary 32A36, 28C10, 44A60.

\bigskip\noindent
\textbf{Keywords:}
Toeplitz operator,
Bergman space,
radial Carleson measure,
spectral sequence.

\end{abstract}

\medskip
\subsection*{Autors' data}

Egor A. Maximenko
(\myurl{https://orcid.org/0000-0002-1497-4338}),\\
Escuela Superior de F\'isica y Matemáticas,
Instituto Polit\'ecnico Nacional,\\
Postal Code 07730,
Ciudad de M\'{e}xico, Mexico\\
egormaximenko@gmail.com, emaximenko@ipn.mx

\medskip\noindent
Carlos G. Pacheco
(\myurl{https://orcid.org/0000-0002-5528-2653}),\\
Departamento de Matem\'{a}ticas,
Cinvestav,\\
Postal Code 07360,
Ciudad de M\'{e}xico, Mexico\\
carlos.pacheco@cinvestav.mx

\section{Introduction}

Let $\bD$ be the unit disk in the complex plane
with the usual Lebesgue plane measure $\lambda_\bC$.
Let $A^2(\bD)$ be the Bergman subspace of $L_2(\bD)$,
consisting of all analytic and square-integrable functions.
If $a\in L_\infty(\bD)$,
then the Toeplitz operator $T_{a}$ acting on $A^{2}(\bD)$,
with symbol $a$,
is the composition of the multiplication by $a$ and the projecting operaror to the subspace $A^{2}(\bD)$:
\[
T_{a}f= P(af).
\]
It is known that this operator can be expressed in terms of the reproducing kernel of the Bergman space:
\begin{equation}\label{EqTa}
T_{a}f(z)
=
\frac{1}{\pi}
\int_{\bD}
\frac{a(w)f(w)}{(1-z \overline{w})^{2}}\,
\dif{}\lambda_\bC(w).
\end{equation}
Korenblum and Zhu~\cite{KorenblumZhu1995} noticed
that if $a$ is an essentially bounded function on $[0,1)$,
naturally extended onto $\bD$ by $a(z)\eqdef a(|z|)$,
then $T_a$ is diagonal
with respect to the canonical basis of $A^2(\bD)$,
and the corresponding eigenvalues are
\begin{equation}
\label{eq:eigenvalues_radial_Toeplitz_bounded_symbol}
\gamma_a(n)
=2(n+1) \int_{[0,1)} a(r) r^{2n+1}\,\dif{}r
\quad(n\in\bNz\eqdef\{0,1,2,\ldots\}).
\end{equation}
The main result of~\cite{KorenblumZhu1995} was a compactness criterion for such operators $T_a$.

Grudsky and Vasilevski~\cite{GrudskyVasilevski2001}
studied Toeplitz operators
with unbounded radial defining symbols $a$.
In particular,
for the case of positive functions $a$,
they found a characterization of
their boundedness and compactness.

In this paper, we extend some ideas from~\cite{GrudskyVasilevski2001}
to Toeplitz operators induced by radial measures.

If $\mu$ is a finite Borel measure on $\bD$,
then $T_\mu$ is defined on a dense subset of $A^2(\bD)$ by
\begin{equation}
\label{EqTmu}
T_{\mu}f(z)
\eqdef
\frac{1}{\pi}
\int_{\bD}
\frac{f(w)}{(1-z \overline{w})^{2}}\,\dif{}\mu(w).
\end{equation}
Following Zhu~\cite[Section~7.2]{Zhu2007},
we say that a finite Borel measure $\mu$ on $\bD$
is a \emph{Carleson type measure for $A^2(\bD)$} or,
shortly, \emph{Carleson--Bergman measure},
if there exists a constant $C\ge0$ such that
for every $f$ in $A^2(\bD)$,
\[
\int_{\bD}|f(z)|^{2}\,\dif{}\mu(z)
\leq
C \int_{\bD}|f(z)|^{2}\,\dif{}\lambda_\bC(z).
\]
The reader may consult~\cite[Section~7.2]{Zhu2007}
for other equivalent descriptions.
It is known~\cite[Theorem~7.5]{Zhu2007}
that the operator $T_\mu$ is bounded
if and only if $\mu$ is a Carleson--Bergman measure.
Moreover,
if $\mu$ is a Carleson--Bergman measure,
then the sesquilinear form associated to $T_\mu$
can be represented as the following integral
(see \cite[note after Theorem~7.5]{Zhu2007}):
\begin{equation}
\label{eq:Toeplitz_measure_sesquilinear_form}
\langle T_\mu f,g\rangle
=
\int_\bD f\,\overline{g}\,\dif\mu
\qquad(f,g\in A^2(\bD)).
\end{equation}

We study the situation
when $\mu$ is a \emph{radial measure},
which means that $\mu$ is invariant under all rotations
associated to the unit circle
$\bT\eqdef\{z\in\bC\colon |z|=1\}$.
In Subsection~\ref{subsec:RadM}
we show that every finite radial Borel measure on $\bD$,
after passing from $\bD$ to $[0,1)\times\bT$,
can be decomposed into the product of two measures:
one finite Borel measure $\eta$ on $[0,1)$
and the Lebesgue (or Haar) measure on $\bT$.
Furthermore, in Subsections~\ref{subsec:canonical_basis}
and~\ref{subsec:RadOper}
we recall the canonical isomorphism between $A^2(\bD)$,
the concept of radial operators
and their diagonalization.

In Section~\ref{sec:diagonalization_Toeplitz_radial_measure},
supposing that $\mu$ is a finite radial Carleson--Bergman measure on $\bD$,
we show that the corresponding Toeplitz operator $T_\mu$ is diagonal with respect to the canonical basis,
and compute its eigenvalue sequence $\gamma_\eta$
in terms of the measure $\eta$,
which is the ``radial component'' of $\mu$.
When $\dif\eta(r)=a(r)\,r\,\dif{}r$,
where $a\in L_\infty([0,1))$ and $a\ge0$,
our formula \eqref{EqTmu} takes the form \eqref{EqTa}.
In Section~\ref{sec:Berezin}
we express the Berezin transform of $\mu$ in terms of $\eta$
and also in terms of $\ga_\eta$.

In Section~\ref{sec:AF}
we introduce a function $\ka_\eta\colon[0,1)\to[0,+\infty)$
which gives average values of $\eta$ at the boundary.
Using the integration by parts,
we express $\ga_\eta$ and the Berezin transform of $\eta$ in terms of $\ka_\eta$.

In Section~\ref{sec:radial_measure_criterion_Carleson}
we show that for any finite radial Borel measure $\mu$,
the condition ``$\mu$ is Carleson--Bergman''
is equivalent to the boundedness of $\ka_\eta$
(and also to the boundedness of $\ga_\eta$).

In Section~\ref{sec:Lip}
we show that if $\ka_\eta$ is bounded,
then $\ga_\eta$ is Lipschitz continuous with respect to the logarithmic distance $d_{\log}$ on $\bNz$.

Finally, in Section~\ref{sec:class_of_radial_complex_measures},
we trivially extend some of the previous results to the case of complex radial measures
that are complex linear combinations of positive radial Carleson--Bergman measures.
In particular, we conclude that the corresponding eigensequences $\gamma_\eta$ generate the same C*-algebra as the eigensequences $\gamma_a$ with $a$ in $L_\infty([0,1))$.
The latter C*-algebra was described in
\cite{Suarez2008,
GrudskyMaximenkoVasilevski2013,
BauerHerreraYanezVasilevski2014,
HerreraMaximenkoVasilevski2015}.

We hope that this paper will
inspire other investigations
on special classes of Carleson type measures on Bergman, Fock and other similar spaces.
We recall that Vasilevski and his colleagues found various classes of bounded functions on $\bD$
and on the upper half-plane
that induce commutative C*-algebras of Toeplitz operators;
see, e.g.,
\cite{Vasilevski2003,GrudskyKarapetyantsVasilevski2003,GrudskyQuirogaVasilevski2006,Vasilevski2008}.
A natural question is to find classes of measures that define commutative classes of operators.
Moreover, one can consider Toeplitz operators induced by more abstract defining symbols, such as distributions;
see, e.g.,~\cite{RozenblumVasilevski2016,
Suarez2015,
EsmeralRozenblumVasilevski2019}.

\section{Radial measures and radial operators}
\label{sec:radial_measures_and_radial_operators}

\subsection{Radial measures}
\label{subsec:RadM}

Radial measures have been used by various authors,
especially in the context of weight functions for function spaces
(see, e.g., \cite{DalMasoMosco1985,HachadiYoussfi2019}).
In this subsection, we give a formal and complete treatment of this concept.

The intuitive idea of a radial measure comes from a radial function, which is that the values of it do not change along the circumference.
One can see that this idea is also related to having measure invariance when rotating a Borel set; and this helps to propose a proper definition of radial measure, which is given in Definition~\ref{DefRM}.

We start by giving some basic ingredients.
Given a topological space $T$,
we denote by $\cB_T$ the set of all Borel subsets of $T$.
Here we consider finite Borel measures on $\cB_T$, where $T$ can be $\bD$, $[0,1)$ or $\bT\eqdef\{z\colon\ |z|=1\}$.
Since these spaces are Polish, the measures are regular, see, e.g., \cite[Theorem 17.7]{Aliprantis}), which is relevant for us because we also consider Haar measures.

\begin{defn}[radial measure]\label{DefRM}
Let $\mu$ be a Borel measure on $\bD$.
We say that $\mu$ is \emph{radial},
if for every $\tau$ in $\bT$
and every $Z$ in $\cB_\bD$,
\[
\mu(\tau Z) = \mu(Z).
\]
\end{defn}

Now, we turn to another intuitive idea of how a radial measure can be decompose into two parts,
one is a measure $\eta$ on the unit interval, which is precisely the radial part,
and the other one is a Haar measure $\mu_{\bT}$ on the circumference.
Let us see rigorously how this equivalence holds.

We denote by $\Tht$ the radial change of variables:
\[
\Tht \colon [0,1)\times\bT \to \bD,
\qquad \Tht(\tau,r) \eqdef r\tau.
\]

Let $\la_\bT\colon\cB_{\bT}\to[0,1]$ be the Haar measure on $\bT$
such that $\la_\bT(\bT)=2\pi$.
Equivalently, given $X$ in $\cB_\bT$,
\[
\la_\bT(X)
\eqdef
\la\left(\bigl\{\tht\in[0,2\pi)\colon\ \enumber^{\iu\tht}\in X\bigr\}\right).
\]
In this paper, $\la$ is the Lebesgue measure on $\bR$.

\begin{defn}[radial extension of a measure]\label{DefRadialExt}
Let $\eta\colon\cB_{[0,1)}\to[0,+\infty)$ be a Borel measure,
and consider its extension over $[0,1]$ by taking $\eta(\{1\})=0$.
Let $\eta\times\la_{\bT}$ represents the product measure.
We define a Borel measure $\nu_\eta\colon\cB_{\bD}\to[0,+\infty)$ as the following pushforward measure: 
\begin{equation}
\label{eq:radial_measure_extension}
\nu_\eta(Z)
\eqdef
(\eta\times\la_{\bT}) (\Tht^{-1}(Z))
\qquad(Z\in \cB_{\bD}).
\end{equation}
In this case we say that $\nu_\eta$ is \emph{the radial extension} of $\eta$.
\end{defn}

\begin{rem}
\label{rem:regular_measure}
The reason of taking $\eta(\{1\})=0$ in previous definition is to work in a compact space, which helps guarantee that the measure $\nu_{\eta}$ is regular, as can be seen in Aliprantis~\cite[Theorem 12.10]{Aliprantis}.
\end{rem}

Now we can link Definitions~\ref{DefRM} and \ref{DefRadialExt}.

\begin{thm}[decomposition of a radial measure]
\label{ThmERM}
Let $\mu\colon\cB_\bD\to[0,+\infty)$ be a finite Borel measure.
Then the following conditions are equivalent:
\begin{itemize}
\item[(a)] $\mu$ is radial;
\item[(b)] there exists a finite Borel measure $\eta\colon\cB_{[0,1)}\to[0,+\infty)$, such that 
$\mu$ is its radial extension, i.e., $\mu=\nu_\eta$.
\end{itemize}
\end{thm}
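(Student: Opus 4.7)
The implication (b) $\Rightarrow$ (a) is immediate from the translation invariance of Haar measure on $\bT$. Indeed, for any $\tau\in\bT$ and $Z\in\cB_\bD$, the preimage $\Tht^{-1}(\tau Z)$ consists of those $(r,\sigma)\in[0,1)\times\bT$ with $r(\tau^{-1}\sigma)\in Z$, which is the image of $\Tht^{-1}(Z)$ under the rotation $\sigma\mapsto\tau\sigma$ in the circle coordinate. Since $\la_\bT$ is rotation invariant, so is $\eta\times\la_\bT$, and therefore $\mu(\tau Z)=(\eta\times\la_\bT)(\Tht^{-1}(\tau Z))=(\eta\times\la_\bT)(\Tht^{-1}(Z))=\mu(Z)$.

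For the nontrivial direction (a) $\Rightarrow$ (b), I would define $\eta$ as the appropriately normalized pushforward of $\mu$ under the modulus map $\rho(z)\eqdef|z|$:
\[
\eta(E)\eqdef\frac{1}{2\pi}\,\mu(\rho^{-1}(E))\qquad(E\in\cB_{[0,1)}).
\]
This is manifestly a finite Borel measure on $[0,1)$, and the task is then to verify $\mu=\nu_\eta$. The plan is to check this equality on the $\pi$-system of \emph{polar rectangles} $\Tht(E\times X)$ with $E\in\cB_{[0,1)}$ and $X\in\cB_\bT$, and then invoke Dynkin's $\pi$-$\lambda$ theorem; these sets generate $\cB_\bD$ because $\Tht$ restricts to a homeomorphism between $(0,1)\times\bT$ and $\bD\setminus\{0\}$, and the origin is a single atom.

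The crucial step is a Haar-uniqueness argument. For fixed $E\in\cB_{(0,1)}$, consider the set function
\[
\nu_E(X)\eqdef\mu(\Tht(E\times X))\qquad(X\in\cB_\bT).
\]
By the injectivity of $\Tht$ on $(0,1)\times\bT$, $\nu_E$ is a finite Borel measure on $\bT$; the identity $\tau\cdot\Tht(E\times X)=\Tht(E\times\tau X)$ combined with the radiality of $\mu$ shows that $\nu_E$ is rotation invariant. By uniqueness of Haar measure on $\bT$, $\nu_E=c(E)\la_\bT$ for some constant $c(E)\ge 0$, and setting $X=\bT$ gives $c(E)=\eta(E)$. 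Hence $\mu(\Tht(E\times X))=\eta(E)\,\la_\bT(X)=(\eta\times\la_\bT)(E\times X)=\nu_\eta(\Tht(E\times X))$ for every such rectangle. The atom at the origin is absorbed by the definition $\eta(\{0\})=\mu(\{0\})/(2\pi)$, and the convention $\eta(\{1\})=0$ from Definition~\ref{DefRadialExt} takes care of the boundary. The main obstacle is the Haar-uniqueness step, which is standard but requires translating radiality of $\mu$ into rotation invariance of $\nu_E$; once this is done, the rest is measure-theoretic bookkeeping plus a routine appeal to Dynkin's theorem.
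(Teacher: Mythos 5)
Your proof is correct and follows essentially the same route as the paper: both hinge on applying uniqueness of Haar measure on $\bT$ to the slice measures $X\mapsto\mu(\Tht(E\times X))$ and then extending the resulting identity from polar rectangles to all Borel sets. If anything, your write-up is slightly more careful than the paper's: you include the normalization $\tfrac{1}{2\pi}$ in the definition of $\eta$ (the paper's formula $\eta(X)=\mu(\Tht(X\times\bT))$ omits it), you exclude the origin from the Haar argument and treat it as an atom so that the slice measures are genuinely additive, and you justify the extension step by Dynkin's $\pi$--$\lambda$ theorem rather than the paper's looser appeal to regularity.
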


\begin{proof}
(b)$\Rightarrow$(a).
Suppose that $\eta$ is a Borel measure on $[0,1)$
and $\mu=\nu_\eta$.
Given $Z$ in $\cB_\bD$ and $\tau$ in $\bT$, we have
\begin{align*}
\Tht^{-1}(\tau Z)
&=\bigl\{(r,t)\in[0,1)\times\bT\colon\ rt\in\tau Z\bigr\}
\\
&=\bigl\{(r,t)\in[0,1)\times\bT\colon\ rt\tau^{-1}\in Z\bigr\}
\\
&=\bigl\{(r,t\tau)\colon\ (r,t)\in\Tht^{-1}(Z)\bigr\}.
\end{align*}
As mentioned in Remark~\ref{rem:regular_measure},
we know that $\nu_{\eta}$ is regular,
thus it is enough to study sets $Z=\Tht(X\times Y)$, where $X$ and $Y$ are Borel set in $[0,1]$ and $\bT$, respectively.
Since $\la_\bT$ is invariant under rotations,
\[
\nu_\eta(\tau Z)
=(\eta\times\la_{\bT}) (\Tht^{-1}(Z))
=\eta (X) \la_{\bT} (\tau Y)
=\eta (X) \la_{\bT} (Y)
=\nu_\eta(Z).
\]
(a)$\Rightarrow$(b).
Suppose that $\mu$ is a radial finite Borel measure on $\bD$.
For every $X$ in $\cB_{[0,1)}$,
we define $\phi_X\colon\cB_\bT\to[0,+\infty)$
by
\[
\phi_X(Y)
\eqdef \mu(\Tht(X\times Y))
\qquad(X\in\cB_\bT).
\]
Since $\Tht$ is continuous,
$\phi_X$ is also a regular Borel measure,
see \cite[Theorem 12.10]{Aliprantis}.
If $\tau\in\bT$,
then for every $Y$ in $\cB_\bT$,
\[
\Tht(X \times (\tau Y))
= \tau \Tht(X\times Y).
\]
Using the assumption that $\mu$ is radial, we get
\[
\phi_X(\tau Y)
= \mu(\Tht(X \times (\tau Y)))
= \mu(\Tht(X\times Y))
= \phi_X(Y).
\]
Thus,
$\phi_X$ is invariant under the group operation in $\bT$;
that is,
$\phi_X$ is a Haar measure on $\bT$.
Since the Haar measure is unique up to multiplication by strictly positive constants,
$\phi_X$ is a multiple of the Haar measure $\mu_\bT$.
It means that there exists a number
which we denote by $\eta(X)$
such that for every $Y$ in $\cB_\bT$,
\[
\phi_X(X) = \eta(X) \la_\bT(Y).
\]
In particular,
\[
\eta(X)
=
\mu(\Tht(X \times \bT)).
\]
Now, one can check that $\eta$
is a Borel measure on $[0,1)$.

Let us show that $\nu_\eta=\mu$.
For every $Z$ in $\cB_{\bD}$
of the form $Z=\Tht(X\times Y)$,
where $X\in\cB_{[0,1)}$ and $Y\in\cB_\bT$ and ,
\[
\nu_\eta(Z)
= \eta(X)\,\la_\bT(Y)
= \phi_X(Y)
= \mu(\Tht(X\times Y))
= \mu(Z).
\]
Since $\mu$ and $\nu_\eta$ are regular Borel measures,
we conclude that $\nu_\eta = \mu$.
\end{proof}

\begin{rem}
\label{rem:integral_wrt_radial_measure}
If $\eta$ is a positive Borel measure on $[0,1)$
and $\mu=\nu_\eta$,
then the integrals with respect to $\mu$
can be written in the following form:
\begin{equation}
\label{eq:integral_with_respect_to_radial_measure}
\int_{\bD} f\,\dif\mu
=
\int_{[0,1)} \int_{\bT}
f(r\tau)\,\dif\nu(r)\,\dif\mu_\bT(\tau).
\end{equation}
This formula can be easily checked for characteristic functions
and then extended to positive measurable functions
and even to functions of the class $L_1(\bD,\mu)$.
\end{rem}

\begin{ex}
\label{ex:eta0}
Let $\eta_0$ be the measure defined on $[0,1)$ by
$\dif\eta_0(r)\eqdef r\,\dif{}r$.
Then $\nu_{\eta_0}$
is the usual Lebesgue measure
$\la_\bC$ on $\bD$.
\end{ex}

\subsection{Canonical basis of Bergman space}
\label{subsec:canonical_basis}

For each $k$ in $\bNz$,
we define $b_k\colon\bD\to\bC$ by
\begin{equation}
\label{eq:canonical_basis}
b_k(z)\eqdef \sqrt{\frac{k+1}{\pi}}\,z^k.
\end{equation}
Furthermore, we denote by $b$ the sequence
$(b_k)_{k\in\bNz}
=(b_0,b_1,b_2,\ldots)$
of these functions.
It is well known (see, e.g., \cite[proof of Corollary~4.20]{Zhu2007})
that $b$ is an orthonormal basis of $A^2(\bD)$.
We call it the \emph{canonical basis} of $A^2(\bD)$.

For each $k$ in $\bNz$,
let $e_k=(\delta_{k,n})_{n\in\bNz}$,
where $\delta$ is the Kronecker delta.
It is well known that $(e_k)_{k\in\bNz}$
is an orthonormal basis of $\ell^2(\bNz)$.

Following ideas by Vasilevski~\cite{Vasilevski2003},
we briefly explain a natural isomorphism between $A^2(\bD)$ and $l^2(\bNz)$ in terms of the following three operators.

$\bullet$.
The operator $U_{0}$ maps $L_{2}(\bD)$
onto its tensor product representation
$L_2([0,1)\times\bT, \eta_0\times\lambda_{\bT})
=L_{2}([0,1), \eta_{0}(r))\otimes L_{2}(\bT)$,
using the change of variables $\Tht$.
Hence, for any $f$ in $L_{2}(\bD)$,
\begin{equation*}
(U_{0}f)(r,\tau) \eqdef f(r\tau)
\qquad(r\in[0,1),\ \tau\in\bT).
\end{equation*}

$\bullet$.
The operator
$U_1\colon L_2([0,1)\times\bT, \eta_0\times\lambda_{\bT})\to \widetilde{H}$
is the inverse of the Fourier transform
acting in the second component:
\begin{equation*}
(U_1 g)(r,k)
\eqdef
\frac{1}{\sqrt{2\pi}}
\int_{\bT} g(r,\tau)\tau^{-k}
\dif\lambda_{\bT}(\tau).
\end{equation*}
Here $\widetilde{H}\eqdef L_2([0,1),\eta_0)\otimes l_2(\bZ)$.

$\bullet$
The operator $U_{2}\colon \widetilde{H}\to \ell_2(\bZ)$
acts by
\begin{equation*}
(U_{2}h)(k)
\eqdef \sqrt{2(k+1)}\int_{[0,1)} h(r,k)r^{k+1}\,\dif\lambda(r).
\end{equation*}
It is easy to see that if $f\in A^2(\bD)$,
then $U_2 U_1 U_0 \in \ell_2(\bNz)$.

We denote by $R$ the composition $U_2 U_1 U_0$ with domain restricted to $A_2(\bD)$ and codomain restricted to $\ell_2(\bNz)$.

\begin{prop}
\label{PropRf}
1. For every $f$ in $L_{2}(\bD)$ and $k$ in $\bNz$,
\[
(Rf)(k)=\langle f,b_{k} \rangle.
\]
2. For every $k$ in $\bNz$,
\[
Rb_{k}=e_{k} \quad \text{and}\quad R^\ast e_{k}=b_{k}.
\]
3. $R$ is an isometric isomorphism between $A^2(\bD)$ and $\ell_2(\bNz)$.
\end{prop}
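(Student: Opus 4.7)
The plan is to prove part 1 first by direct computation; parts 2 and 3 will then follow almost automatically.

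For part 1, I would unwind the three operators $U_0, U_1, U_2$. Writing the composition in a single iterated integral,
\[
(Rf)(k)
= \sqrt{2(k+1)}\cdot\frac{1}{\sqrt{2\pi}}
\int_{[0,1)} r^{k+1}\!\left(\int_\bT f(r\tau)\,\tau^{-k}\,\dif\lambda_\bT(\tau)\right)\dif\lambda(r)
= \sqrt{\tfrac{k+1}{\pi}}\int_{[0,1)}\!\!\int_\bT f(r\tau)\,\tau^{-k}\,r^{k+1}\,\dif\lambda_\bT(\tau)\,\dif\lambda(r).
\]
On the other hand, I would expand $\langle f,b_k\rangle$ using the definition of $b_k$, and convert to polar coordinates by applying Remark~\ref{rem:integral_wrt_radial_measure} to the measure $\eta_0$ from Example~\ref{ex:eta0} (which identifies $\lambda_\bC$ with $\nu_{\eta_0}$, so $\dif\lambda_\bC = r\,\dif{}r\otimes\dif\lambda_\bT$). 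Using $\overline{\tau}=\tau^{-1}$ on $\bT$, this gives exactly the same double integral, so the two quantities agree.

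For part 2, the identity $Rb_k = e_k$ is obtained by specializing part 1: $(Rb_k)(n) = \langle b_k,b_n\rangle = \delta_{k,n}$, since the $b_k$'s are orthonormal. For $R^\ast e_k = b_k$, I would verify the equality by pairing against the orthonormal basis $(b_n)$: from part 1 and the identity just established,
\[
\langle R^\ast e_k, b_n\rangle
= \langle e_k, Rb_n\rangle
= \langle e_k, e_n\rangle
= \delta_{k,n}
= \langle b_k, b_n\rangle,
\]
so $R^\ast e_k$ and $b_k$ have the same Fourier coefficients in the basis $(b_n)$.

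Part 3 follows from a standard abstract argument: $R$ maps the orthonormal basis $(b_k)_{k\in\bNz}$ of $A^2(\bD)$ bijectively onto the orthonormal basis $(e_k)_{k\in\bNz}$ of $\ell_2(\bNz)$, so the linear extension is an isometric isomorphism. The only substantive point to check is that $R$ actually lands in $\ell_2(\bNz)$ and not merely in $\ell_2(\bZ)$; this is ensured by the holomorphicity of elements of $A^2(\bD)$ (their Fourier coefficients in $\tau$ vanish for negative indices), which is the content of the remark right before the proposition. The main obstacle, therefore, is really just bookkeeping of the normalizing constants in part 1 — matching $\sqrt{2(k+1)}/\sqrt{2\pi}$ against $\sqrt{(k+1)/\pi}$ after the polar change of variables — everything else is a direct consequence.
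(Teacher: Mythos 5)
Your proposal is correct and follows essentially the same route as the paper: part 1 is proved by unwinding $U_2U_1U_0$ into a double integral over $[0,1)\times\bT$ and recognizing it as $\int_\bD f\,\overline{b_k}\,\dif\lambda_\bC$ after the polar change of variables, and parts 2 and 3 are then derived as immediate consequences. The paper merely states that parts 2 and 3 follow from part 1 without writing out the orthonormality and adjoint computations you supply, so your version is just a slightly more detailed rendering of the same argument.
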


\begin{proof}
Combining $U_{0}$, $U_{1}$ and $U_{2}$, we obtain
\begin{align*}
(Rf)(k)
&=
\frac{\sqrt{2(k+1)}}{\sqrt{2\pi}}
\int_{[0,1)} \int_{\bT}
f(r\tau)\tau^{-k}r^{k+1}\,\dif\mu_{\bT}(\tau)\,\dif{}r
\\
&=
\int_{[0,1)}\int_{\bT}
f(r\tau)\frac{\sqrt{k+1}\,\tau^{-k}r^{k}}{\sqrt{\pi}}\dif\mu_{\bT}(\tau)\,r\,\dif{}r
\\
&=
\int_{\bD} f(z)\overline{b_{k}(z)}\,\dif\la_\bC(z).
\qedhere
\end{align*}
The second and third point are consequences of the first one.
\end{proof}

\subsection{Radial operators}
\label{subsec:RadOper}

In this subsection,
we recall the concept of bounded radial operators
acting in $A^2(\bD)$.
Zorboska~\cite{Zorboska2002} characterized bounded radial operators in terms of the ``radialization transform''.
It is known~\cite{GrudskyMaximenkoVasilevski2013, Quiroga2016}
that a bounded operator acting in $A^2(\bD)$ is radial
if and only if it is diagonal with respect to the canonical basis defined by~\eqref{eq:canonical_basis}.
For completeness,
we give a short elementary proof of this fact.
We denote by $\cB(A^2(\bD))$
the algebra of all bounded operators on $A^2(\bD)$.

\medskip
For every $\tau$ in $\bT$,
we denote by $\rho_{\tau}$ the rotation operator
acting in $A^2(\bD)$ by the following rule:
\begin{equation}\label{Eqrhotau}
(\rho_{\tau} f)(z)
\eqdef
f(\tau^{-1}\,z).
\end{equation}
It is worth to notice the following.
One can see that $(\rho_{.},A^2(\bD))$ is a unitary representation of the group $\bT$. 
Quiroga-Barranco~\cite{Quiroga2016}
applied some tools from representation theory to study radial and separately radial operators.

\begin{defn}
Define $\cC(\rho)$, which is called the centralizer of $\rho$, as the collection of operators that commute with every rotation operator:
\[
\cC(\rho)
\eqdef
\bigl\{
S\in\cB(A^2(\bD))\colon\quad
\forall \tau\in\bT\quad \rho_{\tau}S=S\rho_{\tau}
\bigr\}.
\]
The elements of $\cC(\rho)$ are called \emph{radial operators}.
\end{defn}

\begin{defn}\label{DefiDiago}
Let $S\in\cB(A^2(\bD))$.
The operator $S$ is called \emph{diagonal} with respect to $b$,
if there exists $\al$ in $\ell_\infty(\bNz)$ such that
\begin{equation}
\label{eq:Sb_eq_al_b}
\forall j\in\bNz\qquad Sb_j = \al_j b_j.
\end{equation}
\end{defn}

Recall that given a sequence $\al$ of the class $l_\infty(\bNz)$,
the multiplication operator $M_\al$ on $l_{2}(\bNz)$
is defined by
\[
M_{\al}(c) \eqdef (\al_k c_k)_{k\in\bNz}
\qquad(c\in l_2(\bNz)).
\]
Taking into account Proposition~\ref{PropRf},
\eqref{eq:Sb_eq_al_b} is equivalent to
$R S R^\ast = M_\al$.

\begin{lem}
\label{lem:rho_basic}
For every $j$ in $\bNz$ and every $\tau$ in $\bT$,
$\rho_{\tau}b_j = \tau^{-j}b_j$.
\end{lem}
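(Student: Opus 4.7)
The plan is to just unfold the two definitions and compute. The lemma asserts that the canonical basis vectors $b_j$ are eigenvectors of the rotation operator $\rho_\tau$ with eigenvalue $\tau^{-j}$, and since $b_j$ is essentially the monomial $z^j$ up to a normalizing constant, the claim should follow immediately from the homogeneity of $z \mapsto z^j$ under scalar multiplication by $\tau^{-1}$.

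Concretely, I would fix $j \in \bNz$ and $\tau \in \bT$, then evaluate $(\rho_\tau b_j)(z)$ at an arbitrary $z \in \bD$. By definition~\eqref{Eqrhotau} we have $(\rho_\tau b_j)(z) = b_j(\tau^{-1} z)$, and by the defining formula~\eqref{eq:canonical_basis},
\[
b_j(\tau^{-1} z)
= \sqrt{\frac{j+1}{\pi}}\,(\tau^{-1} z)^j
= \tau^{-j}\,\sqrt{\frac{j+1}{\pi}}\,z^j
= \tau^{-j}\, b_j(z).
\]
Since this holds for every $z \in \bD$, we get the operator identity $\rho_\tau b_j = \tau^{-j} b_j$.

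There is no real obstacle here; the statement is a one-line verification whose purpose is to set up the subsequent proof that radial operators are exactly those diagonal with respect to $(b_k)_{k\in\bNz}$. The only thing to be mildly careful about is the sign of the exponent of $\tau$: because $\rho_\tau$ acts by $f \mapsto f(\tau^{-1}\cdot)$ rather than $f \mapsto f(\tau\cdot)$, the eigenvalue is $\tau^{-j}$ and not $\tau^{j}$. This sign convention will matter later when expanding rotation-invariance conditions on operators via the basis $b$.
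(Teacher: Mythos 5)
Your proof is correct and coincides with the paper's own one-line verification: both simply unfold the definitions of $\rho_\tau$ and $b_j$ and use $(\tau^{-1}z)^j = \tau^{-j}z^j$. Nothing further is needed.
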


\begin{proof}
Indeed,
$(\rho_{\tau}b_j)(z)
=b_j(\tau^{-1}z)
=\sqrt{\frac{j+1}{\pi}}\,\tau^{-j}z^j
=\tau^{-j}b_j(z)$.
\end{proof}

\begin{prop}\label{PropEquiv}
Let $S\in\cB(A^2(\bD))$.
Then $S$ is radial if and only if
$S$ is diagonal with respect to $b$.
\end{prop}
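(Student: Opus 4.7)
The plan is to use Lemma~\ref{lem:rho_basic}, which gives $\rho_\tau b_j = \tau^{-j} b_j$, to convert the commutation condition $\rho_\tau S = S\rho_\tau$ into a condition on the matrix entries of $S$ with respect to the canonical basis $b$.

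For the easy direction ``diagonal $\Rightarrow$ radial'', I would assume $Sb_j = \al_j b_j$ and verify directly that
\[
\rho_\tau(Sb_j) = \al_j \tau^{-j} b_j = S(\rho_\tau b_j)
\qquad(j\in\bNz,\ \tau\in\bT).
\]
Since $\rho_\tau S$ and $S\rho_\tau$ are bounded and agree on the orthonormal basis $b$, they coincide on all of $A^2(\bD)$.

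For the converse, I would suppose $S\in\cC(\rho)$ and expand $Sb_j = \sum_{k\in\bNz} c_{k,j}\,b_k$, where $c_{k,j}\eqdef\langle Sb_j,b_k\rangle$. Applying the lemma to both sides of $\rho_\tau(Sb_j)=S(\rho_\tau b_j)$ yields
\[
\sum_{k\in\bNz} c_{k,j}\tau^{-k}\, b_k
=
\tau^{-j}\sum_{k\in\bNz} c_{k,j}\, b_k,
\]
so comparing coefficients gives $c_{k,j}(\tau^{-k}-\tau^{-j})=0$ for every $\tau\in\bT$. For $k\ne j$ there is some $\tau$ with $\tau^{-k}\ne\tau^{-j}$, hence $c_{k,j}=0$. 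Setting $\al_j\eqdef c_{j,j}$ then gives $Sb_j=\al_j b_j$, and the estimate $|\al_j|=|\langle Sb_j,b_j\rangle|\le\|S\|$ shows $\al\in\ell_\infty(\bNz)$, as required by Definition~\ref{DefiDiago}.

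I do not foresee any serious obstacle: the argument is essentially the baby case of Schur's lemma, since the rotation representation $\rho$ decomposes $A^2(\bD)$ into one-dimensional weight spaces $\bC b_j$ corresponding to pairwise distinct characters $\tau\mapsto\tau^{-j}$ of $\bT$. The only mildly delicate point is to remember that the eigenvalue sequence is automatically bounded, but this is immediate from the operator norm of $S$.
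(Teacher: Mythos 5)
Your proposal is correct and follows essentially the same route as the paper: the easy direction is verified on the basis exactly as in the text, and your coefficient comparison $c_{k,j}(\tau^{-k}-\tau^{-j})=0$ is just the paper's computation $\langle Sb_j,b_k\rangle=\tau^{k-j}\langle Sb_j,b_k\rangle$ written via the series expansion (with the interchange of $\rho_\tau$ and the sum justified by unitarity of $\rho_\tau$). The boundedness of $\al$ via $\|S\|$ is also handled identically, so there is nothing to add.
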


\begin{proof}
($\Rightarrow$)
Let $S\in\cC(\rho)$.
For every $j$ in $\bNz$, we put
$\al_j\eqdef\langle Sb_j,b_j\rangle$.
Then $|\al_j|\le\|S\|$ for every $j$.
Therefore, $\al\in l^\infty(\bN)$.
For every $k$ in $\bN$ with $k\ne j$,
we will show that $\langle Sb_j,b_k\rangle = 0$.
Take $\tau$ in $\bT$ such that
$\tau^{k-j}\ne 1$.
For instance,
$\tau \eqdef \enumber^{\frac{\iu\pi}{k-j}}$.
Using Lemma~\ref{lem:rho_basic}
and the assumption $S\in\cC(\rho)$, we get
\begin{align*}
\langle Sb_j,b_k\rangle
&= \langle \rho_{\tau}^\ast S \rho_{\tau} b_j, b_k \rangle
= \langle S \rho_{\tau} b_j, \rho_{\tau} b_k \rangle
\\
&= \langle S \tau^{-j} b_j, \tau^{-k} b_k\rangle
= \tau^{k-j} \langle S b_j, b_k \rangle,
\end{align*}
which holds only if $\langle S b_j,b_k\rangle = 0$.
Since $b$ is an orthonormal basis
of $A^2(\bD)$, we conclude that $S b_j = \al_j b_j$.

\medskip
($\Leftarrow$)
Let $S$ be diagonal with respect to $b$.
We denote by $\al$ the sequence of
the corresponding eigenvalues.
So, $Sb_j=\al_j b_j$ for every $j$.
For every $j$ in $\bNz$, we get
\begin{align*}
\rho_{\tau} S b_j
&= \rho_{\tau} \al_j b_j
= \al_j \rho_{\tau} b_j
= \al_j \tau^{-j} b_j,
\\[1ex]
S \rho_{\tau} b_j
&= S \tau^{-j} b_j
= \tau^{-j} S b_j
= \al_j \tau^{-j} b_j.
\end{align*}
So, $\rho_{\tau}S b_j = S\rho_{\tau} b_j$.
Since $b$ is an orthonormal
basis of $A^2(\bN)$,
we have proved that
$\rho_{\tau}S=S\rho_{\tau}$.
\end{proof}

\section{Diagonalization of the Toeplitz operator
induced by radial measure}
\label{sec:diagonalization_Toeplitz_radial_measure}

In this section,
we carry out the diagonalization of $T_\mu$
in the canonical basis,
supposing that $\mu$ is a radial Carleson--Bergman measure.

First, following ideas of Subsection~\ref{subsec:RadOper},
we check that $T_\mu$ commutes with the rotation operators.

\begin{prop}
Let $\mu$ be a radial Carleson--Bergman measure on $\bD$.
Then $T_\mu\in\cC(\rho)$.
\end{prop}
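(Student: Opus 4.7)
The plan is to work with the sesquilinear form representation \eqref{eq:Toeplitz_measure_sesquilinear_form} of $T_\mu$, which is well defined because $\mu$ is Carleson--Bergman. Concretely, for any $f,g\in A^2(\bD)$ and any $\tau\in\bT$, I will compute $\langle T_\mu\rho_\tau f,\rho_\tau g\rangle$ by substituting the definition of $\rho_\tau$ from \eqref{Eqrhotau}:
\[
\langle T_\mu\rho_\tau f,\rho_\tau g\rangle
=\int_\bD f(\tau^{-1}w)\,\overline{g(\tau^{-1}w)}\,\dif\mu(w).
\]
Then I will apply a change of variable $w\mapsto\tau w$ to push the $\tau^{-1}$ off of $f$ and $g$. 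The key fact needed is that the pushforward of $\mu$ under the rotation $w\mapsto\tau w$ equals $\mu$; this is precisely Definition~\ref{DefRM} translated into the integration language. As remarked in the paragraph after Definition~\ref{DefRadialExt} and justified exactly as in Remark~\ref{rem:integral_wrt_radial_measure}, one verifies $\int_\bD h(\tau^{-1}w)\,\dif\mu(w)=\int_\bD h(w)\,\dif\mu(w)$ first for characteristic functions of Borel sets and then extends to general integrable $h$; applying this with $h=f\overline{g}$ (which lies in $L_1(\bD,\mu)$ since $\mu$ is Carleson--Bergman and $f,g\in A^2(\bD)$) gives
\[
\langle T_\mu\rho_\tau f,\rho_\tau g\rangle
=\int_\bD f(w)\,\overline{g(w)}\,\dif\mu(w)
=\langle T_\mu f,g\rangle.
\]

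To conclude, I will use that $\rho_\tau$ is unitary on $A^2(\bD)$: indeed, by Lemma~\ref{lem:rho_basic} we have $\rho_\tau b_j=\tau^{-j}b_j$ with $|\tau^{-j}|=1$, so $\rho_\tau$ preserves the orthonormal basis up to unimodular factors and is unitary with $\rho_\tau^{-1}=\rho_\tau^\ast=\rho_{\tau^{-1}}$. Thus the identity above rewrites as $\langle\rho_\tau^\ast T_\mu\rho_\tau f,g\rangle=\langle T_\mu f,g\rangle$ for all $f,g\in A^2(\bD)$, which gives $\rho_\tau^\ast T_\mu\rho_\tau=T_\mu$, i.e., $T_\mu\rho_\tau=\rho_\tau T_\mu$. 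Since $\tau\in\bT$ was arbitrary, this is exactly $T_\mu\in\cC(\rho)$.

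The only place requiring care is the rotation-invariance of the integral, i.e., translating $\mu(\tau Z)=\mu(Z)$ into $\int h(\tau^{-1}w)\,\dif\mu(w)=\int h(w)\,\dif\mu(w)$; this is the main (minor) obstacle, and it is settled by the standard extension from indicator functions to $L_1(\bD,\mu)$. Everything else is an elementary manipulation using \eqref{eq:Toeplitz_measure_sesquilinear_form} and Lemma~\ref{lem:rho_basic}.
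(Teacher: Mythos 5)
Your proof is correct, but it travels a different road than the paper's. The paper argues pointwise: it writes out $(\rho_\tau T_\mu f)(z)$ using the reproducing-kernel formula \eqref{EqTmu}, performs the change of variable $\zeta=\tau w$ inside the kernel integral, and invokes radiality of $\mu$ to conclude $(\rho_\tau T_\mu f)(z)=(T_\mu\rho_\tau f)(z)$ directly. You instead work with the quadratic form \eqref{eq:Toeplitz_measure_sesquilinear_form}, show $\langle T_\mu\rho_\tau f,\rho_\tau g\rangle=\langle T_\mu f,g\rangle$ via the same rotation-invariance-of-the-integral step, and then convert this into the operator identity using the unitarity of $\rho_\tau$ (which you correctly justify from Lemma~\ref{lem:rho_basic}, with $\rho_\tau^\ast=\rho_{\tau^{-1}}$). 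The underlying mechanism --- pushing the rotation through a $\mu$-integral, justified by $\mu(\tau Z)=\mu(Z)$ and the standard extension from indicators to $L_1(\bD,\mu)$ --- is the same in both arguments. What your route buys is that you never need the pointwise convergence of the kernel integral in \eqref{EqTmu} for arbitrary $z$ (a fact the paper uses implicitly); you only need the sesquilinear identity quoted from Zhu and the integrability of $f\overline{g}$ with respect to $\mu$, which you correctly deduce from the Carleson--Bergman hypothesis via Cauchy--Schwarz. The cost is two extra ingredients (the weak-form identity and unitarity of $\rho_\tau$) and a slightly longer chain of reasoning; the paper's computation is shorter and self-contained. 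Both are valid proofs.
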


\begin{proof}
Since the measure $\mu$ is radial,
\begin{align*}
(\rho_{\tau} T_{\mu} f) (z)
&=
(T_\mu f)(\bar{\tau} z)
=
\int_{\bD}
\frac{f(w)}{(1-\bar{\tau} \bar{z}\,w)^2}\,\dif\mu(w)
\\
&=
\int_{\bD}
\frac{f(\zeta \bar{\tau})}{(1-\bar{z}\zeta)^{2}}\,\dif\mu(\bar{\tau}\zeta)
=
\int_{\bD}
\frac{f(\zeta \bar{\tau})}{(1-\bar{z}\zeta)^{2}}\,\dif\mu(\zeta)
=
(T_{\mu} \rho_{\tau} f) (z),
\end{align*}
where we used the change of variable
$\zeta=\tau w$.
\end{proof}

From Proposition~\ref{PropEquiv} we know that $T_{\mu}$ is diagonal with respect to the canonical basis;
now, the aim is to idenfity its eigenvalues,
i.e., the numbers $\al_{j}$
from Definition~\ref{DefiDiago}.

\begin{defn}
\label{def:gamma}
Given a finite Borel measure $\eta$ on $[0,1)$,
let $\ga_\eta\colon\bNz\to\bC$ be a sequence given by
\begin{equation}
\label{eq:gamma}
\ga_\eta(k)
\eqdef
2(k+1)\int_{[0,1)} r^{2k}\,\dif\eta(r).
\end{equation}
\end{defn}

\begin{thm}
\label{ThmDiagonal}
Let $\mu$ be a radial Carleson--Bergman measure on $\bD$
of the form $\mu=\nu_\eta$,
where $\eta$ is a finite Borel measure on $[0,1)$.
Then,
\[
R T_{\mu} R^\ast = M_{\ga_\eta}.
\]
\end{thm}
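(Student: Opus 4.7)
The plan is to combine the previous proposition (that $T_\mu$ commutes with every rotation $\rho_\tau$) with Proposition~\ref{PropEquiv} to conclude immediately that $T_\mu$ is diagonal in the canonical basis $b$, and then to identify the eigenvalues by a direct integral computation against the sesquilinear form \eqref{eq:Toeplitz_measure_sesquilinear_form}. So I expect that writing $R T_\mu R^\ast = M_\alpha$ for some $\alpha\in\ell_\infty(\bNz)$ comes for free, and the entire content of the theorem is the identity $\alpha_j=\ga_\eta(j)$.

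To extract $\alpha_j$, I would use orthonormality of $b$ together with the diagonality relation $T_\mu b_j=\alpha_j b_j$ to write $\alpha_j=\langle T_\mu b_j,b_j\rangle$. Next I would apply \eqref{eq:Toeplitz_measure_sesquilinear_form} with $f=g=b_j$, which turns this inner product into
\[
\alpha_j
=\int_\bD |b_j(z)|^2\,\dif\mu(z)
=\frac{j+1}{\pi}\int_\bD |z|^{2j}\,\dif\mu(z),
\]
where I used the explicit formula \eqref{eq:canonical_basis} for $b_j$.

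The last step is to unfold $\mu=\nu_\eta$ via Remark~\ref{rem:integral_wrt_radial_measure}. Since the integrand $|z|^{2j}$ depends only on the modulus, the angular integral yields $\la_\bT(\bT)=2\pi$, and
\[
\alpha_j
=\frac{j+1}{\pi}\int_{[0,1)}\int_{\bT} r^{2j}\,\dif\la_\bT(\tau)\,\dif\eta(r)
=2(j+1)\int_{[0,1)} r^{2j}\,\dif\eta(r)
=\ga_\eta(j),
\]
which matches Definition~\ref{def:gamma}. The equality $R T_\mu R^\ast = M_{\ga_\eta}$ then follows from the remark after Definition~\ref{DefiDiago}.

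I don't anticipate any genuine obstacle: the Carleson--Bergman hypothesis is used exactly once, to justify invoking \eqref{eq:Toeplitz_measure_sesquilinear_form} (and to know $T_\mu$ is bounded so that $\alpha\in\ell_\infty$). One small bookkeeping point worth stating carefully is that Fubini is applicable in the unfolding step because $\eta$ is finite and the integrand is bounded on $\bD$, so no integrability issue arises; everything else is routine.
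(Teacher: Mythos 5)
Your proposal is correct and follows essentially the same route as the paper: the paper also first establishes that $T_\mu$ commutes with the rotations and invokes Proposition~\ref{PropEquiv} for diagonality, and its second proof of Lemma~\ref{LemmaTmubk} is exactly your computation via the sesquilinear form \eqref{eq:Toeplitz_measure_sesquilinear_form} and the orthogonality relation \eqref{eq:orthogonal_Fourier} (the paper computes all entries $\langle T_\mu b_j,b_k\rangle$, while you only need the diagonal ones since diagonality is already known). The eigenvalue identification and the use of the Carleson--Bergman hypothesis are handled just as in the paper.
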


We prove Theorem~\ref{ThmDiagonal}
by applying $T_\mu$ to the elements of the canonical basis $b$.
In the proof of the following lemma,
we use the well-known orthogonal property of the basic Fourier functions on $\bT$:
\begin{equation}
\label{eq:orthogonal_Fourier}
\int_{\bT} \tau^{j-k}\,\dif\lambda_{\bT}(\tau)
= 2\pi \delta_{j,k},
\end{equation}
where $\delta_{j,k}$ is Kronecker's delta.

\begin{lem}
\label{LemmaTmubk}
In the assumptions of Theorem~\ref{ThmDiagonal},
for each $k$ in $\bNz$,
\begin{equation*}
T_{\mu}b_{k}=\ga_\eta(k) b_{k}.
\end{equation*}
\end{lem}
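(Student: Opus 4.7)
The cleanest route is to compute the matrix entries $\langle T_\mu b_k, b_j\rangle$ for all $j,k\in\bNz$ and then reconstruct $T_\mu b_k$ using the fact that $b=(b_j)_{j\in\bNz}$ is an orthonormal basis of $A^2(\bD)$. Working with the sesquilinear form avoids any manipulation of the kernel $(1-z\bar w)^{-2}$ and the interchange of sum and integral that a direct pointwise computation would require.

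The plan is the following. First, invoke the sesquilinear representation \eqref{eq:Toeplitz_measure_sesquilinear_form}, which is available precisely because $\mu$ is assumed to be Carleson--Bergman, to write
\[
\langle T_\mu b_k, b_j\rangle
=
\int_\bD b_k\,\overline{b_j}\,\dif\mu
=
\frac{\sqrt{(k+1)(j+1)}}{\pi}
\int_\bD w^k \overline{w}^{\,j}\,\dif\mu(w).
\]
Second, use the assumption $\mu=\nu_\eta$ together with Remark~\ref{rem:integral_wrt_radial_measure} to pass to polar coordinates $w=r\tau$, factoring the integrand as $w^k\overline{w}^{\,j}=r^{k+j}\tau^{k-j}$ and separating the variables:
\[
\int_\bD w^k \overline{w}^{\,j}\,\dif\mu(w)
=
\int_{[0,1)} r^{k+j}\,\dif\eta(r)\,
\int_{\bT} \tau^{k-j}\,\dif\lambda_\bT(\tau).
\]
Third, apply the orthogonality relation \eqref{eq:orthogonal_Fourier}, which collapses the angular integral to $2\pi\,\delta_{j,k}$. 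Substituting and using the definition \eqref{eq:gamma} of $\ga_\eta$ gives
\[
\langle T_\mu b_k, b_j\rangle
=
\delta_{j,k}\,2(k+1)\int_{[0,1)} r^{2k}\,\dif\eta(r)
=
\delta_{j,k}\,\ga_\eta(k).
\]
Finally, since $b$ is an orthonormal basis of $A^2(\bD)$ and $T_\mu b_k\in A^2(\bD)$, the Parseval expansion yields
\[
T_\mu b_k
=
\sum_{j\in\bNz}\langle T_\mu b_k,b_j\rangle\,b_j
=
\ga_\eta(k)\,b_k,
\]
which is the desired conclusion.

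There is no real obstacle; the only point that requires a moment of care is checking that $\ga_\eta(k)$ is indeed well defined and finite, but this is immediate because $\eta$ is a finite measure and $r^{2k}\le 1$ on $[0,1)$. All the heavy lifting has already been done in earlier sections: the radial decomposition $\mu=\nu_\eta$ from Theorem~\ref{ThmERM}, the integration formula~\eqref{eq:integral_with_respect_to_radial_measure}, the sesquilinear form identity~\eqref{eq:Toeplitz_measure_sesquilinear_form}, and the orthogonality~\eqref{eq:orthogonal_Fourier} do all the work; the computation itself is essentially a one-line Fubini argument.
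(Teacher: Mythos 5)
Your proposal is correct and coincides with the paper's own second proof of this lemma: both compute the matrix entries $\langle T_\mu b_j,b_k\rangle$ via the sesquilinear form \eqref{eq:Toeplitz_measure_sesquilinear_form}, pass to polar coordinates using $\mu=\nu_\eta$, and apply the orthogonality relation \eqref{eq:orthogonal_Fourier}. (The paper also gives a first proof via the power-series expansion of the Bergman kernel, which your argument deliberately and legitimately avoids.)
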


\begin{proof}[First proof]
Let $k\in\bNz$ and $z\in\bD$.
We apply the definition of $T_\mu$
and decompose of the kernel of $A^2(\bD)$
into the power series:
\begin{align*}
(T_{\mu}b_{k})(z)
&=
\frac{1}{\pi}
\int_{\bD}
\frac{b_{k}(w)}{(1-z\overline{w})^{2}}\,\dif\mu(w)
=
\frac{\sqrt{k+1}}{\pi\sqrt{\pi}}
\int_{\bD}\frac{w^{k}}{(1-z\overline{w})^{2}}\,\dif\mu(w)
\\
&=
\frac{\sqrt{k+1}}{\pi\sqrt{\pi}}
\int_{\bD}
\Biggl(\,
\sum_{j=0}^\infty
(j+1) z^j \overline{w}^j w^k
\Biggr)\,\dif\mu(w).
\end{align*}
Since $\mu$ is a finite measure and $|z|<1$,
the following series of integrals converges:
\[
\sum_{j=0}^\infty
\int_{\bD}
\bigl|(j+1)z^j \overline{w}^j w^k\bigr|\,\dif\mu(w)
\le
\mu(\bD)\,
\sum_{j=0}^\infty (j+1) |z|^j
< +\infty.
\]
Therefore, in the previous expression,
we can interchange the integral with the series:
\begin{align*}
(T_{\mu}b_{k})(z)
&=
\frac{\sqrt{k+1}}{\pi\sqrt{\pi}}
\sum_{j=0}^{\infty}(j+1)z^{j}
\int_{\bD} \overline{w}^{j}w^{k}\,\dif\mu(w).
\end{align*}
Next, we apply change of variable $w=r\tau$
and the assumption that $\mu=\nu_\eta$:
\begin{align*}
(T_{\mu}b_{k})(z)
&=
\frac{\sqrt{k+1}}{\pi\sqrt{\pi}}
\sum_{j=0}^{\infty}(j+1)z^{j}
\int_{[0,1)}\int_{\bT}
r^{j}\tau^{-j}r^{k}\tau^{k}\,
\dif\lambda_{\bT}(\tau)\,\dif\eta(r) \\
&=
\frac{\sqrt{k+1}}{\pi\sqrt{\pi}}
\sum_{j=0}^{\infty}(j+1)z^{j}
\int_{[0,1)} r^{j+k}\,\dif\eta(r)
\int_{\bT}\tau^{k-j} \dif\lambda_{\bT}(\tau).
\end{align*}
Finally, due to~\eqref{eq:orthogonal_Fourier},
\[
(T_{\mu}b_{k})(z)
=
\frac{2\sqrt{k+1}}{\sqrt{\pi}} (k+1)z^{k}
\int_{[0,1)} r^{2k}\,\dif\eta(r)
=
\gamma_\eta(k) b_{k}(z).
\qedhere
\]
\end{proof}

We now give a second proof by analyzing the inner product.

\begin{proof}
Given $j$ and $k$ in $\bNz$,
we apply~\eqref{eq:Toeplitz_measure_sesquilinear_form}
and the change of variables $z=r\tau$
to calculate
$\langle T_{\mu}b_{j}, b_{k}\rangle$:
\begin{align*}
\langle T_{\mu}b_{j}, b_{k}\rangle
&=
\int_{\bD}
b_{j}(z)\overline{b_{k}(z)}\,\dif\mu(z)
=
\frac{\sqrt{(j+1)(k+1)}}{\pi}
\int_{\bD}
z^{j}\overline{z}^k\,\dif\mu(z)
\\
&=
\frac{\sqrt{(j+1)(k+1)}}{\pi}
\int_{[0,1)}
\int_{\bT} r^{j} \tau^{j}r^{k} \tau^{-k}
\,\dif\mu_{\bT}(\tau)\,\dif\eta(r)
\\
&=
\frac{\sqrt{(j+1)(k+1)}}{\pi}
\int_{[0,1)} r^{j+k}\,\dif\eta(r)
\int_{\bT} \tau^{j-k}\,\dif\mu_{\bT}(\tau).
\end{align*}
Using~\eqref{eq:orthogonal_Fourier}
we get
$\langle T_\mu b_j,b_k\rangle = \delta_{j,k}\ga_\eta(k)$.
\end{proof}

We have now a proof of Theorem \ref{ThmDiagonal}.

\begin{proof}
It is enough to analize
$RT_{\mu}R^\ast e_{k}$ for each $k$ in $\bNz$.
However, from Proposition~\ref{PropRf} and Lemma~\ref{LemmaTmubk},
$RT_{\mu}R^\ast e_{k}= \gamma_\eta(k) e_{k}$.
\end{proof}

\begin{cor}
If $\mu_{1}$ and $\mu_{2}$
are radial Carleson--Bergman measures on $\bD$,
then the operators $T_{\mu_{1}}$ and $T_{\mu_{2}}$ commute.
\end{cor}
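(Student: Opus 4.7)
The plan is to reduce everything to the fact that diagonal operators on $\ell^2(\bNz)$ commute. Since both $\mu_1$ and $\mu_2$ are radial (as finite Borel measures, being Carleson--Bergman implies they are finite by definition), Theorem~\ref{ThmERM} supplies finite Borel measures $\eta_1,\eta_2$ on $[0,1)$ with $\mu_i=\nu_{\eta_i}$ for $i=1,2$.

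Next, I would invoke Theorem~\ref{ThmDiagonal} twice to obtain
\[
R T_{\mu_1} R^\ast = M_{\ga_{\eta_1}},
\qquad
R T_{\mu_2} R^\ast = M_{\ga_{\eta_2}}.
\]
Because $R$ is a unitary isomorphism by Proposition~\ref{PropRf}, conjugation by $R$ is a $\ast$-homomorphism on $\cB(A^2(\bD))$, so it preserves commutators. Thus $[T_{\mu_1},T_{\mu_2}]=0$ if and only if $[M_{\ga_{\eta_1}},M_{\ga_{\eta_2}}]=0$. The latter is immediate: for any $c\in\ell^2(\bNz)$ and $k\in\bNz$, both $M_{\ga_{\eta_1}}M_{\ga_{\eta_2}}c$ and $M_{\ga_{\eta_2}}M_{\ga_{\eta_1}}c$ equal the sequence with $k$-th entry $\ga_{\eta_1}(k)\ga_{\eta_2}(k)c_k$, since multiplication of complex numbers is commutative.

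There is no real obstacle here; the only mild point is to make sure that the hypotheses of Theorem~\ref{ThmDiagonal} are indeed satisfied for each $\mu_i$, which is exactly the Carleson--Bergman assumption that guarantees boundedness of $T_{\mu_i}$ and the decomposition $\mu_i=\nu_{\eta_i}$ with finite $\eta_i$. Putting the pieces together yields $T_{\mu_1}T_{\mu_2}=T_{\mu_2}T_{\mu_1}$.
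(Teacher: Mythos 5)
Your proof is correct and is exactly the argument the paper intends: the corollary is stated immediately after Theorem~\ref{ThmDiagonal} with no separate proof, precisely because both operators are simultaneously diagonalized by the same unitary $R$, and the resulting multiplication operators $M_{\ga_{\eta_1}}$, $M_{\ga_{\eta_2}}$ obviously commute. Your care in checking that the hypotheses of Theorems~\ref{ThmERM} and~\ref{ThmDiagonal} are met is appropriate but routine; there is nothing to add or correct.
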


\section{The Berezin transform of a radial measure}
\label{sec:Berezin}

The Berezin transform
is a useful tool to analyze properties of measures.
For this reason we calculate it for radial measures, which we know from Theorem \ref{ThmERM} are of the form $\nu_\eta$ (see Definition \ref{DefRadialExt}).
So, $\eta$ represents the radial part of a radial measure.

Given a finite Borel measure $\mu$ on $\bD$,
its \emph{Berezin transform}
$\Ber_\mu\colon\bD\to[0,+\infty]$
is defined by the following formula
(see~\cite[Section~7.1]{Zhu2007}):
\begin{equation}
\label{eq:Berezin_transform_def}
\Ber_\mu(z)
\eqdef
\frac{1}{\pi}
(1-|z|^2)^2
\int_{\bD} \frac{\dif\mu(z)}{|1-\overline{z}w|^4}.
\end{equation}
In this section, we are going to compute $\Ber_{\nu_\eta}$.

\begin{lem}
\label{lem:auxiliar_integral_for_Berezin_radial}
Let $0<a<1$.
Then
\begin{equation}
\label{eq:auxiliar_integral_for_Berezin_radial}
\frac{1}{2\pi}\int_0^{2\pi}
\frac{\dif\tht}{(1-2a\cos\tht+a^2)^2}
=\frac{1+a^2}{(1-a^2)^3}.
\end{equation}
\end{lem}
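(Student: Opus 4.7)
The plan is to recognize the integrand as the modulus-squared of the square of the Cauchy-type kernel $1/(1-ae^{\iu\tht})$, expand it as an absolutely convergent double series in $e^{\iu k\tht}$, integrate term by term using the orthogonality of the Fourier exponentials already recorded in the paper as~\eqref{eq:orthogonal_Fourier}, and finally close the resulting one-variable series in closed form.

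More concretely, I would first write
\[
1-2a\cos\tht+a^2
=(1-a\enumber^{\iu\tht})(1-a\enumber^{-\iu\tht})
=|1-a\enumber^{\iu\tht}|^2,
\]
so that the integrand equals $|1-a\enumber^{\iu\tht}|^{-4}$. Since $0<a<1$, the Taylor expansion
\[
\frac{1}{(1-a\enumber^{\iu\tht})^2}
=\sum_{n=0}^{\infty}(n+1)\,a^n\enumber^{\iu n\tht}
\]
converges absolutely, and together with its complex conjugate gives
\[
\frac{1}{|1-a\enumber^{\iu\tht}|^4}
=\sum_{m,n=0}^{\infty}(m+1)(n+1)\,a^{m+n}\enumber^{\iu(n-m)\tht}.
\]

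The double series is absolutely and uniformly convergent in $\tht$, which justifies interchanging summation and integration. Applying the orthogonality relation~\eqref{eq:orthogonal_Fourier} (or its analogue on $[0,2\pi]$) kills every off-diagonal term and leaves
\[
\frac{1}{2\pi}\int_0^{2\pi}\frac{\dif\tht}{(1-2a\cos\tht+a^2)^2}
=\sum_{n=0}^{\infty}(n+1)^2 a^{2n}.
\]

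The remaining step is the elementary identity $\sum_{n=0}^{\infty}(n+1)^2 x^n=\frac{1+x}{(1-x)^3}$ for $|x|<1$, obtained by differentiating the geometric series twice; specializing to $x=a^2$ yields $(1+a^2)/(1-a^2)^3$, which is the right-hand side of~\eqref{eq:auxiliar_integral_for_Berezin_radial}. There is no genuine obstacle here: the only care needed is the absolute-convergence justification for the interchange of sum and integral, and that is immediate because $\sum_{m,n}(m+1)(n+1)a^{m+n}=(1-a)^{-4}<\infty$. An entirely equivalent alternative would be to convert the integral to a contour integral around the unit circle via $z=\enumber^{\iu\tht}$ and compute it by residues at the two poles $z=a$ and $z=0$ of a rational function, but the series approach is shorter and dovetails naturally with the Bergman-kernel expansions already appearing in the paper.
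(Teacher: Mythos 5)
Your proof is correct, but it follows a genuinely different route from the paper's. The paper converts the integral to a contour integral over $\bT$ via $z=\enumber^{\iu\tht}$, obtaining $\frac{1}{2\pi\iu}\int_{\bT} z\,(1-az)^{-2}(z-a)^{-2}\,\dif z$, and evaluates it as the residue at the single double pole $z=a$ inside $\bD$ (note, incidentally, that in this formulation there is no pole at $z=0$, since the numerator carries a factor of $z$; your parenthetical aside about ``two poles'' is the one slightly inaccurate remark in your write-up, though it does not touch your actual argument). Your approach instead factors $1-2a\cos\tht+a^2=|1-a\enumber^{\iu\tht}|^2$, expands $(1-a\enumber^{\iu\tht})^{-2}$ and its conjugate as absolutely convergent power series, integrates term by term using orthogonality, and closes the diagonal sum $\sum_{n\ge0}(n+1)^2a^{2n}=(1+a^2)/(1-a^2)^3$. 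Both proofs are complete and about equally short; the residue proof is self-contained and avoids any convergence bookkeeping, while your series proof has the advantage of dovetailing with the rest of the paper --- the identity $\sum_{n\ge0}(n+1)^2z^n=(1+z)/(1-z)^3$ and the term-by-term integration against $\tau^{j-k}$ are exactly the ingredients the authors use later in the proofs of Lemma~\ref{LemmaTmubk} and Proposition~\ref{prop:be_via_ga}, so your route would let the lemma share machinery with those arguments rather than importing residue calculus for a single computation.
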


\begin{proof}
Let $J$ be the left-hand side of~\eqref{eq:auxiliar_integral_for_Berezin_radial}.
For $a=0$, $J=1$,
and the equality~\eqref{eq:auxiliar_integral_for_Berezin_radial} holds.
Let $0<a<1$.
Make the change of variable
$z=\enumber^{\iu\tht}$.
Then
\[
J=\frac{1}{2\pi\iu}
\int_{\bT}
\frac{z\,\dif{}z}{(1-az)^2\,(z-a)^2}.
\]
Let $f$ be the function under the integral.
It is meromorphic on the complex plane,
holomorphic in a neighborhood of $\bT$,
and it has just one pole in $\bD$.
Namely, $a$ is a pole of $f$ of order $2$.
To compute the residue of $f$ at $a$,
we consider the following auxiliary function:
\[
h(z)
\eqdef f(z)\,(z-a)^2
=z\,(1-az)^{-2}.
\]
Its derivative is
\[
h'(z)=\frac{1+az}{(1-az)^3}.
\]
Now, we compute $J$ as the residue of $f$ at $a$:
\[
J
=\operatorname{res}(f,a)
=h'(a)
=\frac{1+a^2}{(1-a^2)^3}.
\qedhere
\]
\end{proof}

\begin{defn}
\label{DefBeta}
Given a Borel measure $\eta$ on $[0,1)$,
we define $\be_\eta\colon[0,1)\to[0,+\infty]$ by
\begin{equation}
\label{eq:beta_def}
\be_\eta(a)
\eqdef
2(1-a^2)^2
\int_{[0,1)}
\frac{(1+a^2 r^2)\,\dif\eta(r)}{(1-a^2 r^2)^3}
\qquad(a\in[0,1)).
\end{equation}
\end{defn}

\begin{prop}[Berezin transform of radial measure]
\label{prop:Berezin_transform_of_radial_measure}
Let $\eta$ be a finite Borel measure on $[0,1)$.
Then the Berezin transform of $\nu_\eta$ is given by
\[
\Ber_{\nu_\eta}(w)
=\be_\eta(|w|)\qquad(w\in\bD).
\]
\end{prop}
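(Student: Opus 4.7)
The plan is to substitute the integral representation of $\Ber_{\nu_\eta}$ coming from its definition~\eqref{eq:Berezin_transform_def}, convert to polar coordinates via Remark~\ref{rem:integral_wrt_radial_measure}, and evaluate the inner angular integral using Lemma~\ref{lem:auxiliar_integral_for_Berezin_radial}.

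First I would fix $w\in\bD$, write $a\eqdef|w|$, and apply the definition of the Berezin transform:
\[
\Ber_{\nu_\eta}(w)
=\frac{1}{\pi}(1-a^{2})^{2}
\int_{\bD}\frac{\dif\nu_\eta(\zeta)}{|1-\overline{w}\zeta|^{4}}.
\]
Next, using Remark~\ref{rem:integral_wrt_radial_measure} to pass from $\nu_\eta$ to the product $\eta\times\la_\bT$ on $[0,1)\times\bT$, and the change of variable $\zeta=r\tau$, I would rewrite this as an iterated integral
\[
\Ber_{\nu_\eta}(w)
=\frac{(1-a^{2})^{2}}{\pi}
\int_{[0,1)}\!\!\int_{\bT}
\frac{\dif\la_\bT(\tau)\,\dif\eta(r)}{|1-\overline{w}r\tau|^{4}}.
\]

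The key step is the inner integral over $\bT$. Writing $w=a\enumber^{\iu\phi}$ and $\tau=\enumber^{\iu\tht}$, one has
\[
|1-\overline{w}r\tau|^{2}
=1-2ar\cos(\tht-\phi)+a^{2}r^{2},
\]
so by periodicity the integral does not depend on $\phi$ and equals
\[
\int_{0}^{2\pi}\frac{\dif\tht}{(1-2ar\cos\tht+a^{2}r^{2})^{2}}.
\]
Applying Lemma~\ref{lem:auxiliar_integral_for_Berezin_radial} with $ar$ in place of $a$ (valid since $ar<1$), this equals $2\pi\,(1+a^{2}r^{2})/(1-a^{2}r^{2})^{3}$.

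Finally, substituting this value back, the factors of $\pi$ cancel and I obtain
\[
\Ber_{\nu_\eta}(w)
=2(1-a^{2})^{2}
\int_{[0,1)}
\frac{(1+a^{2}r^{2})\,\dif\eta(r)}{(1-a^{2}r^{2})^{3}}
=\be_\eta(a)
=\be_\eta(|w|),
\]
which is the desired identity. No step looks genuinely hard: the mild subtlety is just checking that the double integral may be iterated (which follows from positivity of the integrand together with Tonelli's theorem applied to the finite measure $\eta\times\la_\bT$), and that the angular integral really does reduce to the one in Lemma~\ref{lem:auxiliar_integral_for_Berezin_radial} after the translation $\tht\mapsto\tht-\phi$.
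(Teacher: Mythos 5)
Your proposal is correct and follows essentially the same route as the paper's proof: both pass to polar coordinates via the product decomposition of $\nu_\eta$, invoke Tonelli, reduce the angular integral by periodicity to the one computed in Lemma~\ref{lem:auxiliar_integral_for_Berezin_radial} with $ar$ in place of $a$, and match the result with Definition~\ref{DefBeta}. The constants (the factor $2\pi$ from $\la_\bT$ cancelling against $1/\pi$ to give the $2$ in $\be_\eta$) check out.
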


\begin{proof}
Let $w\in\bD$.
We represent $w$ in the form
$w=a \enumber^{\iu\tht}$,
with $a\in[0,1)$ and $\tht\in\bR$.
Then, we apply~\eqref{eq:Berezin_transform_def}
with $\mu=\nu_\eta$:
\[
\Ber_{\nu_\eta}(w)
=
\frac{1}{\pi}
(1-a^2)^2
\int_{\bD}
\frac{\dif\nu_\eta(z)}{|1-z\,a\enumber^{-\iu\phi}|^4}.
\]
Make the change of variables $z=r\enumber^{\iu\tht}$ in the integral and apply the Tonelli theorem:
\[
\Ber_{\nu_\eta}(w)
=
2(1-a^2)^2
\int_{[0,1)}
\left(
\frac{1}{2\pi}
\int_0^{2\pi}
\frac{\dif\tht}{(1-2ar\cos(\tht-\phi)+a^2 r^2)^2}
\right)\,\dif\eta(r).
\]
Since the function in the interior integral is $2\pi$-periodic, it simplifies to the integral from Lemma~\ref{lem:auxiliar_integral_for_Berezin_radial},
with $ar$ instead of $a$.
Thereby, we get $\Ber_{\nu_\eta}(w)=\be_\eta(a)$.
\end{proof}

Notice that for $\dif\eta_{0}(r)=r\,\dif{}r$, a direct computation shows that $\be_{\eta_{0}}(a)=1$ for every $a$.

In the following proposition,
we express $\be_\eta$
in terms of $\ga_\eta$,
where $\ga_\eta$ is a sequence defined
by~\eqref{eq:gamma}.
Here, we do not suppose that $\nu_\eta$ is a Carleson--Bergman measure.

\begin{prop}
\label{prop:be_via_ga}
Let $\eta$ be a finite Borel measure on $[0,1)$.
Then, for every $a$ in $[0,1)$,
\begin{equation}
\label{eq:be_via_ga}
\be_\eta(a)
=
(1-a^2)^2
\sum_{n=0}^\infty (n+1) a^{2n} \ga_\eta(n).
\end{equation}
\end{prop}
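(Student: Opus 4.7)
The plan is to expand the kernel $\frac{1+a^2r^2}{(1-a^2r^2)^3}$ appearing in the definition of $\be_\eta$ as a power series in $a^2 r^2$, interchange the resulting sum with the integral against $\eta$, and then recognize each term as a multiple of $\ga_\eta(n)$.

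First, I would establish the identity
\[
\frac{1+x}{(1-x)^3}=\sum_{n=0}^{\infty}(n+1)^2\, x^n \qquad (|x|<1).
\]
This follows by writing $\frac{1}{(1-x)^3}=\sum_{n\geq 0}\binom{n+2}{2}x^n=\sum_{n\geq 0}\tfrac{(n+1)(n+2)}{2}x^n$, shifting the index in $\frac{x}{(1-x)^3}=\sum_{n\geq 1}\tfrac{n(n+1)}{2}x^n$, and adding the two series; the coefficient of $x^n$ becomes $\tfrac{(n+1)(n+2)+n(n+1)}{2}=(n+1)^2$ for $n\ge 1$, and equals $1=(0+1)^2$ for $n=0$.

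Next, I would substitute $x=a^2r^2\in[0,1)$ in this expansion inside the integral defining $\be_\eta(a)$ in Definition~\ref{DefBeta}. Since every term is non-negative, Tonelli's theorem allows interchanging the sum and the integral with respect to $\eta$:
\[
\be_\eta(a)
=2(1-a^2)^2\int_{[0,1)}\sum_{n=0}^{\infty}(n+1)^2 a^{2n} r^{2n}\,\dif\eta(r)
=2(1-a^2)^2\sum_{n=0}^{\infty}(n+1)^2 a^{2n}\!\!\int_{[0,1)}\! r^{2n}\,\dif\eta(r).
\]
Finally, I would invoke Definition~\ref{def:gamma} to replace $\int_{[0,1)} r^{2n}\,\dif\eta(r)$ by $\ga_\eta(n)/(2(n+1))$, which collapses one factor of $(n+1)$ and the factor $2$, yielding exactly \eqref{eq:be_via_ga}.

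There is no genuine obstacle here: the only thing to watch is justification of the interchange of sum and integral. Since $\eta$ is a \emph{finite} measure on $[0,1)$ and $a\in[0,1)$, every term is non-negative, so Tonelli applies unconditionally; convergence of the resulting series on the right-hand side of \eqref{eq:be_via_ga} is then a consequence (not a prerequisite) of the calculation, with possible value $+\infty$ allowed in $[0,+\infty]$ consistently with Definition~\ref{DefBeta}. The combinatorial identity for the coefficients $(n+1)^2$ is the only spot where a minor slip would be easy, so I would verify it explicitly for small $n$ before trusting the general formula.
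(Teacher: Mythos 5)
Your proof is correct and is essentially the same as the paper's: both rest on the identity $\sum_{n\ge 0}(n+1)^2x^n=(1+x)/(1-x)^3$ and on interchanging the sum with the integral against the positive measure $\eta$ (Tonelli), the only cosmetic differences being that the paper starts from the right-hand side of \eqref{eq:be_via_ga} and derives the identity by differentiating $z/(1-z)^2$ rather than by adding binomial series. No changes needed.
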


\begin{proof}
Let $R$ be the right-hand side of~\eqref{eq:be_via_ga}.
After substituting the definition~\eqref{eq:gamma} of $\ga_\eta(n)$,
we apply the well-known theorem about the series of the Lebesgue integrals of positive functions:
\begin{align*}
R
&=
(1-a^2)^2 \sum_{n=0}^\infty 2(n+1)^2 a^{2n}
\int_{[0,1)} r^{2n}\,\dif\eta(r)
\\
&= 2(1-a^2)^2
\int_{[0,1)} \left(\sum_{n=0}^\infty (n+1)^2 (a r)^{2n}\right)\,\dif\eta(r).
\end{align*}
To calculate the obtained series,
we consider the following auxiliary series
which converges on $\bD$:
\[
f(z)
\eqdef \frac{z}{(1-z)^2}
= \sum_{n=0}^\infty (n+1)z^{n+1}.
\]
Its derivative is the series that we need:
\[
\sum_{n=0}^\infty (n+1)^2 z^n
= f'(z)
= \frac{1+z}{(1-z)^3}.
\]
Therefore,
\[
R
= 2(1-a^2)^2
\int_{[0,1)} f'(ar)\,\dif\eta(r)
= \be_\eta(a).
\qedhere
\]
\end{proof}

\section{Using averages and distribution functions}\label{sec:AF}

It is known that the most important properties of a Toeplitz operator $T_a$ in $A^2(\bD)$
are determined by the behavior of $a$ near the boundary.
Grudsky and Vasilevski~\cite{GrudskyVasilevski2001}
in their study of radial Toeplitz operators
introduced integrals of an integrable function near the boundary.
We extended their idea to radial measures.
In our case, an important tool that we use is the average values, which will be introduced in Definition \ref{DefAver}.
We take $\eta$ to be the radial part of a radial measure, see Theorem \ref{ThmERM}

In this section we consider the distribution function associated to a measure. 
This helps to work with the integration by parts formula associated to measures. 
A good introduction to these ideas can be found
in Fleming and Harrington~\cite[Appendix~A]{FlemingHarrington2005}.

\begin{defn}
Given a finite Borel measure $\eta$ on $[0,1)$, we consider its extension on $(-\infty, \infty)$ such that $\eta([0,1)^{c})=0$.
Now, define the associated distribution function as
\[
F_\eta(u)
\eqdef \eta((-\infty,u]).
\]
\end{defn}

It is known that $F_{\eta}:(-\infty, \infty)\to [0,\infty)$ is non-decreasing and right-continuous function.
We also define its left-continous version as
\[
F_{\eta}^{-}(u)
\eqdef
\lim_{\substack{r\to u\\r<u}}F_\eta(r)
= \eta((-\infty,u)).
\]
Notice that
\[
F_\eta(0) = \eta(\{0\}),\qquad
F_{\eta}^{-}(1) = F_{\eta}(1)= \eta([0,1)).
\]
Now we introduce the \emph{average} of the the radial measure $\eta$ on the interval $[r,1)$.
The term \emph{average} comes from using the following fact.
If $\lambda$ represents the Lebesgue measure on $\bD$, then its radial part is $d\eta_{0}(r)= r\dif r$, thus
$\eta_{0}([r,1))= \frac{1-r^{2}}{2}$.

\begin{defn}\label{DefAver}
Given a finite Borel measure $\eta$ on $[0,1)$,
let $\ka_\eta\colon [0,1)\to[0,+\infty)$ be defined by
\[
\ka_\eta(r)
\eqdef 
\frac{\eta([r,1))}{\eta_{0}([r,1))}=
\frac{2\eta([r,1))}{1-r^2}.
\]
\end{defn}

Equivalently,
\begin{equation}
\label{eq:average_via_distribution}
\ka_\eta(r)
=\frac{2}{1-r^2} (F_{\eta}(1)-F_{\eta}^{-}(r)).
\end{equation}


\begin{rem}[integration by parts for a Riemann--Stieltjes integral with a continuously differentiable function]
Let $f$ be a Borel real-valued function on $(-\infty,\infty)$.
The Stieltjes and Lebesgue integrals are related as follows:
\[
\int_{a}^{b}f(r)\,\dif F_{\eta}(r)
=\int_{(a,b]}f\,\dif \eta,
\]
for $a<b$.
If $f$ is continuously differentiable,
the integration by parts formula tells us that (see Theorem A.1.2 in \cite{FlemingHarrington2005})
\begin{align*}
f(b)F(b)-f(a)F(a)
&=
\int_{a}^{b}f(r)\,\dif F(r)
+\int_{a}^{b}F_{\eta}(r)\,\dif f(r)
\\
&=
\int_{a}^{b}f(r)\,\dif F(r)
+\int_{a}^{b}F_{\eta}^{-}(r)\,\dif f(r).
\end{align*}
\end{rem}

Using previous remark we can then obtain the 
\begin{prop}[integration by parts in our settings]
\label{prop:our_integration_by_parts}
For $u\in [0,1)$,
\begin{align*}
\int_{[0,u)} f\,\dif\eta
&= f(u) \eta([0,u)) - \int_0^u f'(r) F_\eta(r)\,\dif{}r
\\
&= f(u) \eta([0,u)) - \int_0^u f'(r) F_\eta^{-}(r)\,\dif{}r
\end{align*}
\end{prop}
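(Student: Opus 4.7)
The plan is to apply the integration-by-parts formula recorded in the preceding remark on a closed-on-the-right interval $(a, u-\varepsilon]$ with $a<0$ and small $\varepsilon>0$, and then to let $\varepsilon \to 0^+$. Taking $a<0$ is convenient because the measure $\eta$ is supported on $[0,1)$, so $F_\eta(a)=0$ and $\int_{(a,u-\varepsilon]} f\,\dif\eta = \int_{[0,u-\varepsilon]} f\,\dif\eta$. The remark then yields
\[
f(u-\varepsilon)\, F_\eta(u-\varepsilon)
= \int_{[0,u-\varepsilon]} f\,\dif\eta
+ \int_0^{u-\varepsilon} F_\eta(r)\, f'(r)\,\dif r,
\]
where I have used that $f$ is continuously differentiable so $\dif f(r)=f'(r)\,\dif r$.

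Next I would pass to the limit $\varepsilon\to 0^+$ term by term. The boundary term converges to $f(u)\,F_\eta^{-}(u) = f(u)\,\eta([0,u))$ by the continuity of $f$ at $u$ and the definition of the left-continuous version $F_\eta^{-}$. The measure integral converges by (dominated or monotone) convergence on the increasing union $\bigcup_{\varepsilon>0}[0,u-\varepsilon]=[0,u)$, giving $\int_{[0,u)} f\,\dif\eta$; here the finiteness of $\eta$ together with the boundedness of $f$ on $[0,u]$ supplies an integrable majorant. The Lebesgue integral against $f'$ also converges by dominated convergence, since $F_\eta$ is bounded by $\eta([0,1))$ and $f'$ is continuous on $[0,u]$. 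Assembling the three limits gives the first equality in the statement.

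For the second equality I would observe that $F_\eta$ and $F_\eta^{-}$ agree except at the jump points of $F_\eta$, i.e.\ the atoms of $\eta$. Because $\eta$ is a finite Borel measure, the set of its atoms is at most countable and hence has Lebesgue measure zero. Consequently
\[
\int_0^u f'(r)\, F_\eta(r)\,\dif r
= \int_0^u f'(r)\, F_\eta^{-}(r)\,\dif r,
\]
which yields the second form of the identity.

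The only genuinely delicate point is the bookkeeping of the half-open interval $[0,u)$ versus the $(a,b]$-form in which the Stieltjes integration-by-parts formula is stated: one must approach $u$ strictly from below in order to exclude a possible atom at $u$ and to recover the left-continuous value $F_\eta^{-}(u)$ on the boundary. Everything else reduces to the finiteness of $\eta$ together with uniform bounds on $f$ and $f'$ over the compact interval $[0,u]$, so no further subtlety is expected.
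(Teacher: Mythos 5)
Your proof is correct, and it reaches the identity by a slightly different route than the paper. The paper applies the remark's formula directly with $a=0$, $b=u$, and then handles the two endpoints by explicit atom bookkeeping: it writes $\int_{[0,u)}f\,\dif\eta=\int_{\{0\}}f\,\dif\eta+\int_{(0,u]}f\,\dif\eta-\int_{\{u\}}f\,\dif\eta$, uses $F_\eta(0)=\eta(\{0\})$ and $F_\eta(u)-\eta(\{u\})=F_\eta^{-}(u)$, and cancels. You instead start from $a<0$ (so the left boundary term vanishes and the atom at $0$ is swallowed automatically) and approach $u$ from below, trading the atom manipulations for three routine limit passages; both correctly produce the left-continuous value $F_\eta^{-}(u)=\eta([0,u))$ at the right endpoint, and both justify the second equality by the fact that $F_\eta$ and $F_\eta^{-}$ differ only on the (countable, hence Lebesgue-null) set of atoms --- a fact the paper's remark actually already asserts, so you could have cited it rather than re-derived it. The only point worth tightening is that taking $a<0$ tacitly requires $f$ to be continuously differentiable on a neighborhood of $0$; this is harmless here since every $f$ the paper feeds into this proposition ($r^{2n}$, the rational kernel in the Berezin computation) extends smoothly past $0$, but if you want to avoid the issue entirely you can take $a=0$ and add the single term $\int_{\{0\}}f\,\dif\eta=f(0)F_\eta(0)$ back in by hand, exactly as the paper does.
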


\begin{proof}
Using the integration by parts formula in previous remark, with $a=0$ and $b=u$,
\begin{align*}
\int_{[0,u)}f\dif \eta
&=
\int_{ \{0\}}f\dif \eta+ \int_{ (0,u] }f\dif \eta- \int_{ \{u\}}f\,\dif \eta
\\[0.5ex]
&=
f(0)\eta(\{0\}) - f(u)\eta(\{u\})
\\
&\qquad + f(u)F_{\eta}(u)- f(0)F_{\eta}(0)- \int_{0}^{u}F_{\eta}(r)\,\dif f(r)
\\
&=
f(u) F_{\eta}^{-}(u) - \int_{0}^{u}F_{\eta}(r) f^{\prime}(r)\, \dif r
\\
&=
f(u) F_{\eta}^{-}(u) - \int_{0}^{u}F_{\eta}^{-}(r) f^{\prime}(r)\,\dif r.
\end{align*}
which give rise to both equalities.
\end{proof}

We are in position to give the relation with the average.
\begin{prop}[integration by parts via the averages]
\label{prop:integration_by_parts_via_averages}
It holds that
\begin{equation}
\label{eq:integral_by_parts_via_averages}
\int_{[0,1)} f\,\dif\eta
= f(0)F_{\eta}(1)+ \int_{0}^{1}\frac{1-r^{2}}{2}f^{\prime}(r)\ka_{\eta}(r)\dif r.
\end{equation}
\end{prop}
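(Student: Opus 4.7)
The plan is to deduce this from Proposition~\ref{prop:our_integration_by_parts} by passing to the limit $u\to 1^-$ and then recognising $\ka_\eta$ in the resulting integrand. The key observation is purely definitional: from Definition~\ref{DefAver} and \eqref{eq:average_via_distribution},
\[
\frac{1-r^{2}}{2}\,\ka_\eta(r)
= \eta([r,1))
= F_\eta(1)-F_\eta^{-}(r),
\]
so the integral on the right-hand side of \eqref{eq:integral_by_parts_via_averages} coincides with
\[
F_\eta(1)\int_0^1 f'(r)\,\dif r \;-\; \int_0^1 f'(r)\,F_\eta^{-}(r)\,\dif r.
\]
By the fundamental theorem of calculus (assuming $f\in C^1$ on a neighbourhood of $[0,1]$, which is implicit from the use of $f'$), the first summand equals $F_\eta(1)(f(1)-f(0))$.

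Next, I would invoke Proposition~\ref{prop:our_integration_by_parts} applied on $[0,u)$ in the form
\[
\int_{[0,u)} f\,\dif\eta
= f(u)\,F_\eta^{-}(u) - \int_0^u f'(r)\,F_\eta^{-}(r)\,\dif{}r,
\]
and let $u\to 1^-$. On the left, since $\eta$ is finite and $\eta(\{1\})=0$, monotone (or dominated) convergence yields $\int_{[0,1)}f\,\dif\eta$. On the right, continuity of $f$ at $1$ gives $f(u)F_\eta^{-}(u)\to f(1)F_\eta(1)$ (using $F_\eta^{-}(1)=F_\eta(1)$), and boundedness of $F_\eta^{-}$ together with continuity of $f'$ on $[0,1]$ allow dominated convergence in the integral. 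This yields the intermediate identity
\[
\int_{[0,1)} f\,\dif\eta
= f(1)\,F_\eta(1) - \int_0^1 f'(r)\,F_\eta^{-}(r)\,\dif r.
\]

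Finally, I would just match the two expressions algebraically. Subtracting $f(0)F_\eta(1)$ from both sides of the target identity and using the reformulation from the first paragraph, the claim reduces to
\[
\int_{[0,1)} f\,\dif\eta - f(0)F_\eta(1)
= F_\eta(1)\bigl(f(1)-f(0)\bigr) - \int_0^1 f'(r)\,F_\eta^{-}(r)\,\dif r,
\]
which is precisely the intermediate identity rewritten. I expect no real obstacle; the only delicate point is the passage $u\to 1^-$, where one must use that $\eta$ is finite on $[0,1)$ (so that $F_\eta^{-}(1)=F_\eta(1)$ coincides with the total mass and no mass escapes to the boundary) and that $f\in C^1$ provides the uniform bound needed for dominated convergence.
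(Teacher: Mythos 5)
Your proposal is correct and follows essentially the same route as the paper: both deduce the identity from Proposition~\ref{prop:our_integration_by_parts} at the endpoint and then substitute $\frac{1-r^2}{2}\ka_\eta(r)=F_\eta(1)-F_\eta^{-}(r)$ together with $\int_0^1 f'=f(1)-f(0)$. The only difference is that you justify the endpoint case by a limit $u\to1^-$ with dominated convergence, whereas the paper simply applies its formula ``with $u=1$'' directly; your version is slightly more careful on that point but not a different argument.
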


\begin{proof}
From the second equality of Proposition (\ref{prop:our_integration_by_parts}) with $u=1$, we have
\begin{equation}\label{EqSIPA}
\int_{[0,1)} f\,\dif\eta
= f(1)F_{\eta}^{-}(1)- \int_{0}^{1}F_{\eta}^{-}(r)f^{\prime}(r)\dif r.
\end{equation}
And from (\ref{eq:average_via_distribution}) we have 
\[
F_{\eta}^{-}(r)= \eta([0,1))- \frac{1-r^{2}}{2}\ka_{\eta}(r).
\]
Then plugging previous display into \eqref{EqSIPA} yields
\begin{align*}
\int_{[0,1)} f\,\dif\eta 
&=  f(1)F_{\eta}(1)-\int_{0}^{1}\left\{ \eta([0,1))
- \frac{1-r^{2}}{2}\ka_{\eta}(r)\right\}f^{\prime}(r)\,\dif{}r
\\
&= f(1)F_{\eta}(1)-F(1) \int_{0}^{1}f^{\prime}(r)\,\dif{}r
+ \int_{0}^{1} \frac{1-r^{2}}{2}\ka_{\eta}(r)f^{\prime}(r)\,\dif r,
\end{align*}
which gives \eqref{eq:integral_by_parts_via_averages}.
\end{proof}

We apply previous ideas to the spectral sequence $\ga_\eta$.

\begin{prop}[sequence $\ga_\eta$ in terms of $F_\eta$ and $\ka_{\eta}$]
\label{prop:gamma_via_distribution_and_averages}
Let $\eta$ be a finite Borel measure on $[0,1)$.
For every $n$ in $\bN$,
\begin{equation}
\label{eq:gamma_via_distribution}
\ga_\eta(n)
= 2(n+1)\eta([0,1))
- 4n(n+1) \int_0^1 F_\eta(r) r^{2n-1}\,\dif{}r,
\end{equation}
and
\begin{equation}
\label{eq:gamma_via_averages}
\ga_\eta(n)
=2n(n+1) \int_0^1 \ka_\eta(r) r^{2n-1}(1-r^2)\,\dif{}r.
\end{equation}
Moreover,
\begin{equation}
\label{eq:gamma_0_via_everage}
\ga_\eta(0)
= 2\eta([0,1))
= 2F_\eta(1)
= \ka_\eta(0).
\end{equation}
\end{prop}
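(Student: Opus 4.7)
The plan is to apply the integration-by-parts machinery of Propositions \ref{prop:our_integration_by_parts} and \ref{prop:integration_by_parts_via_averages} to the test function $f(r) \eqdef r^{2n}$, which has $f'(r) = 2n r^{2n-1}$ and $f(0) = 0$ for $n \ge 1$. In view of Definition \ref{def:gamma}, $\ga_\eta(n) = 2(n+1) \int_{[0,1)} f\,\dif\eta$, so the two target identities \eqref{eq:gamma_via_distribution} and \eqref{eq:gamma_via_averages} arise directly from rewriting this single integral in two different ways.

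For \eqref{eq:gamma_via_averages}, I would substitute $f(r) = r^{2n}$ into \eqref{eq:integral_by_parts_via_averages}. The boundary term $f(0) F_\eta(1)$ vanishes, and the remaining integral becomes $\int_{[0,1)} r^{2n}\,\dif\eta = n \int_0^1 (1-r^2) r^{2n-1} \ka_\eta(r)\,\dif r$. Multiplying by $2(n+1)$ delivers \eqref{eq:gamma_via_averages} at once.

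For \eqref{eq:gamma_via_distribution}, the statement of Proposition \ref{prop:our_integration_by_parts} is restricted to $u \in [0,1)$, so I would apply the first identity there with $f(r) = r^{2n}$ and then pass to the limit $u \to 1^-$. On the left, monotone convergence gives $\int_{[0,u)} r^{2n}\,\dif\eta \to \int_{[0,1)} r^{2n}\,\dif\eta$; the boundary term $u^{2n}\eta([0,u))$ tends to $\eta([0,1))$ because $\eta$ is a finite measure and $u^{2n} \to 1$; and $\int_0^u 2n r^{2n-1} F_\eta(r)\,\dif r \to \int_0^1 2n r^{2n-1} F_\eta(r)\,\dif r$ by dominated convergence, with the integrand majorized by the Lebesgue-integrable function $2n F_\eta(1) r^{2n-1}$ on $[0,1]$. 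Multiplying the resulting identity by $2(n+1)$ yields \eqref{eq:gamma_via_distribution}.

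Finally, \eqref{eq:gamma_0_via_everage} is immediate: $\ga_\eta(0) = 2\int_{[0,1)}\dif\eta = 2\eta([0,1))$, which matches $2F_\eta(1)$ by the paper's identification $F_\eta(1) = \eta([0,1))$, and also $\ka_\eta(0) = 2\eta([0,1))/(1-0^2) = 2\eta([0,1))$. I expect no serious obstacles: the only step that requires even minimal care is the passage $u \to 1^-$ used for \eqref{eq:gamma_via_distribution}, and since $\eta$ is finite and all integrands are bounded, this is routine.
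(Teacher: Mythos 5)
Your proposal is correct and follows essentially the same route as the paper, whose proof is the one-line remark that the identities follow from Propositions \ref{prop:our_integration_by_parts} and \ref{prop:integration_by_parts_via_averages} applied with $f(r)=r^{2n}$. Your additional care in justifying the passage $u\to 1^-$ (continuity from below for $\eta([0,u))$ and dominated convergence for the integral term) is a welcome detail the paper glosses over by using $u=1$ directly, but it does not change the method.
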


\begin{proof}
It follows from Propositions~\ref{prop:our_integration_by_parts}
and~\ref{prop:integration_by_parts_via_averages}
applied with $f(r)=r^{2n}$.
\end{proof}

Finally, we present the conexion to the Berezin transform, through the function $\be_{\eta}$ of Definition \ref{DefBeta}.
\begin{prop}[$\be_\eta$ in terms of $\ka_\eta$]
For every $a$ in $[0,1)$,
\begin{equation}
\label{eq:beta_via_averages}
\be_\eta(a)
=
2(1-a^{2})^{2}F(1)+
4a^2(1-a^2)^2
\int_0^1
\ka_\eta(r)\,
\frac{(2+a^2 r^2)(1-r^2)r}{(1-a^2 r^2)^4}\,\dif{}r.
\end{equation}
\end{prop}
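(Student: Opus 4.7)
The plan is to apply the integration-by-parts formula of Proposition~\ref{prop:integration_by_parts_via_averages} to the integral defining $\be_\eta(a)$. Recalling Definition~\ref{DefBeta}, we have
\[
\be_\eta(a)
= 2(1-a^2)^2 \int_{[0,1)} f(r)\,\dif\eta(r),
\qquad
f(r) \eqdef \frac{1+a^2 r^2}{(1-a^2 r^2)^3}.
\]
The function $f$ is smooth on $[0,1)$ for fixed $a\in[0,1)$, so the hypothesis of Proposition~\ref{prop:integration_by_parts_via_averages} is met. This gives
\[
\int_{[0,1)} f\,\dif\eta
= f(0)\,F_\eta(1) + \int_0^1 \frac{1-r^2}{2}\,f'(r)\,\ka_\eta(r)\,\dif{}r.
\]
Since $f(0)=1$, the first term produces $2(1-a^2)^2 F_\eta(1)$ after the overall factor is applied, matching the first summand in the claimed formula.

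The main (and only really computational) step is to evaluate $f'(r)$ and verify that $\tfrac{1-r^2}{2}f'(r)$ agrees with the kernel appearing in the statement. Differentiating $f(r) = (1+a^2 r^2)(1-a^2 r^2)^{-3}$ by the product rule yields
\[
f'(r) = \frac{2a^2 r\,(1-a^2 r^2) + 6a^2 r\,(1+a^2 r^2)}{(1-a^2 r^2)^4}
= \frac{4a^2 r\,(2+a^2 r^2)}{(1-a^2 r^2)^4},
\]
after collecting the numerator. Multiplying by $\tfrac{1-r^2}{2}$ produces
\[
\frac{1-r^2}{2}f'(r) = \frac{2a^2\,(2+a^2 r^2)(1-r^2)\,r}{(1-a^2 r^2)^4}.
\]
Inserting this into the integration-by-parts identity and multiplying by the outer factor $2(1-a^2)^2$ gives precisely the claimed expression
\[
\be_\eta(a)
= 2(1-a^2)^2 F_\eta(1)
+ 4a^2(1-a^2)^2 \int_0^1 \ka_\eta(r)\,\frac{(2+a^2 r^2)(1-r^2)\,r}{(1-a^2 r^2)^4}\,\dif{}r.
\]

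I do not anticipate any real obstacle; the only place where a mistake could enter is the algebraic simplification of the numerator of $f'(r)$. Everything else is a direct substitution into an already proved formula, so the proof should be short. One small point worth noting explicitly in the write-up is that $f$ and $f'$ are bounded on $[0,1]$ for each fixed $a\in[0,1)$, which legitimises applying the integration-by-parts formula all the way up to $r=1$.
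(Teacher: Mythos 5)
Your proposal is correct and follows essentially the same route as the paper: both apply Proposition~\ref{prop:integration_by_parts_via_averages} to $f(r)=\frac{1+a^2r^2}{(1-a^2r^2)^3}$ in the definition of $\be_\eta$, and your computation of $f'(r)=\frac{4a^2(2+a^2r^2)r}{(1-a^2r^2)^4}$ matches the paper's. Your added remark about boundedness of $f$ and $f'$ on $[0,1]$ for fixed $a\in[0,1)$ is a reasonable extra precaution that the paper omits.
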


\begin{proof}
Let $a\in[0,1)$.
We start with equation \eqref{eq:beta_def} and consider $f\colon[0,1)\to\bR$ to be
\[
f(r)
\eqdef
\frac{1+a^2 r^2}{(1-a^2 r^2)^3}.
\]
Its derivative is
\[
f'(r)
=\frac{4a^2 (2+a^2 r^2)r}{(1-a^2 r^2)^4}.
\]
By Proposition~\ref{prop:integration_by_parts_via_averages},
we obtain~\eqref{eq:beta_via_averages}.
\end{proof}

\begin{ex}[the radial measure corresponding to the identity Toeplitz operator]
For $\dif\eta_{0}(r)=r\,\dif{}r$,
\begin{align*}
\ga_{\eta_0}(n)
&=
2n(n+1) \int_0^1 r^{2n-2}(1-r^2)\,\dif{}\eta_{0}(r)
\\
&=
2n(n+1) \int_0^1 r^{2n-1}(1-r^2)\,\dif{}r = (n+1) - n
=1.
\end{align*}
\end{ex}

\begin{ex}[Dirac measure]
Let $x\in(0,1)$
and $\delta_x$ be the Dirac measure concentrated at $x$.
This means that
$\delta_x(\{x\})=1$
and $\delta_x([0,1)\setminus\{x\})=0$.
We also consider $\delta_x$ as a measure on the whole real line.
For $\eta=\delta_x$,
the corresponding distribution function is the characteristic function of $[x,+\infty)$, and
\[
\ka_{\delta_x}(r)
=
\begin{cases}
\frac{2}{1-r^2}, & 0\le r\le x;
\\
0, & x<r<1.
\end{cases}
\]
It is easy to check that formulas~\eqref{eq:gamma},
\eqref{eq:gamma_via_distribution}
and \eqref{eq:gamma_via_averages}
yield the same expression for $\ga_{\delta_x}$:
\[
\ga_{\delta_x}(n) = 2(n+1) x^{2n}\qquad(n\in\bNz).
\]
Furthermore, it is possible to verify that
\eqref{eq:beta_def}
and~\eqref{eq:beta_via_averages}
yield the same expression for $\be_{\delta_x}$:
\[
\be_{\delta_x}(a)
=
2(1-a^2)^2
\frac{(1+a^2 x^2)}{(1-a^2 x^2)^3}.
\]
\end{ex}

\section{When a radial measure is Carleson--Bergman measure?}
\label{sec:radial_measure_criterion_Carleson}

Given a finite Borel measure $\mu$ on $\bD$,
it is known (see Zhu~\cite[Theorem~7.5]{Zhu2007})
that $\mu$ is a Carleson--Bergman measure,
if and only if its Berezin transform
$\Ber_\mu$ is a bounded function.
Moreover, this condition is necessary and sufficient for $T_\mu$ to be bounded.

As we show in the following theorem,
this condition can be essentially simplified for radial measures.
In particular, we get a very simple criterion in terms of the function $\ka_\eta$.
The use of the averaging function is inspired by~\cite{GrudskyVasilevski2001},
but we use a different normalization
and work with measures (instead of integrable functions).

\begin{thm}
\label{thm:criterion_radial_Carleson_Bergman}
Let $\eta$ be a finite Borel measure on $[0,1)$
and $\mu=\nu_\eta$.
Then, the following conditions are equivalent:
\begin{itemize}
\item[(a)] $T_\eta$ is a bounded operator;
\item[(b)] $\be_\eta$ is a bounded function;
\item[(c)] $\ga_\eta$ is a bounded sequence;
\item[(d)] $\ka_\eta$ is a bounded function.
\end{itemize}
\end{thm}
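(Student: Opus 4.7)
The plan is to close the cycle (a)$\iff$(b)$\Rightarrow$(d)$\Rightarrow$(c)$\Rightarrow$(b), which combined with (b)$\iff$(a) yields all four equivalences.

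First, (a)$\iff$(b) is immediate from Zhu's Theorem~7.5 in \cite{Zhu2007}, already recalled in the introduction: for a finite positive Borel measure, $T_\mu$ is bounded if and only if $\mu$ is a Carleson--Bergman measure if and only if the Berezin transform $\Ber_\mu$ is bounded. Since Proposition~\ref{prop:Berezin_transform_of_radial_measure} gives $\Ber_{\nu_\eta}(w)=\be_\eta(|w|)$, boundedness of $\Ber_{\nu_\eta}$ on $\bD$ is the same as boundedness of $\be_\eta$ on $[0,1)$.

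Next, (c)$\Rightarrow$(b) is essentially trivial given Proposition~\ref{prop:be_via_ga}: every term in $\be_\eta(a)=(1-a^2)^2\sum_{n=0}^\infty(n+1)a^{2n}\ga_\eta(n)$ is non-negative and $(1-a^2)^2\sum_{n=0}^\infty(n+1)a^{2n}=1$, so $\be_\eta(a)\le\sup_n\ga_\eta(n)$. Similarly, (d)$\Rightarrow$(c) follows from formula~\eqref{eq:gamma_via_averages}: the elementary integral $\int_0^1 r^{2n-1}(1-r^2)\,\dif r=\frac{1}{2n(n+1)}$ yields $\ga_\eta(n)\le\|\ka_\eta\|_\infty$ for $n\ge 1$, while the case $n=0$ is handled by the identity $\ga_\eta(0)=\ka_\eta(0)$ from~\eqref{eq:gamma_0_via_everage}.

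The only step requiring a small estimate is (b)$\Rightarrow$(d), which I expect to be the main (and really only) obstacle. The idea is to go back to the definition~\eqref{eq:beta_def} of $\be_\eta$ and extract $\eta([a,1))$ by restricting the integration domain to $[a,1)$ and using the monotonicity bound $1-a^2r^2\le 1-a^4$ for $r\ge a$:
\[
\be_\eta(a)\ge 2(1-a^2)^2\int_{[a,1)}\frac{\dif\eta(r)}{(1-a^4)^3}
=\frac{2\,\eta([a,1))}{(1-a^2)(1+a^2)^3}
=\frac{\ka_\eta(a)}{(1+a^2)^3}.
\]
Hence $\ka_\eta(a)\le(1+a^2)^3\be_\eta(a)\le 8\|\be_\eta\|_\infty$, and the cycle closes. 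Everything else is bookkeeping with formulas already proved in Sections~\ref{sec:Berezin} and~\ref{sec:AF}; the non-trivial ingredient is recognizing that in the definition of $\be_\eta$ the most singular part of the integrand (the factor $(1-a^2r^2)^{-3}$) is already sharp enough to recover the averaged mass $\ka_\eta(a)$ up to a bounded constant.
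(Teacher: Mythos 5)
Your proof is correct, and the step that closes the cycle is genuinely different from the paper's. The easy inequalities $\|\be_\eta\|_\infty\le\|\ga_\eta\|_\infty\le\|\ka_\eta\|_\infty$ (your (c)$\Rightarrow$(b) and (d)$\Rightarrow$(c)) coincide with Lemma~\ref{lem:trivial_comparison_be_ga_ka}, and (a)$\Leftrightarrow$(b) is handled identically via Zhu's criterion and Proposition~\ref{prop:Berezin_transform_of_radial_measure}. For the remaining implication the paper instead proves (c)$\Rightarrow$(d): starting from \eqref{eq:gamma_via_averages} and the monotonicity of $r\mapsto\eta([r,1))$ it derives $\ga_\eta(n)\ge(1-s^2)(n+1)s^{2n}\,\ka_\eta(s)$, and a technical lemma (Lemma~\ref{lem:m_s_inequality}) then picks $n=\lfloor 1/(2(1-s))\rfloor$ to make the coefficient exceed $\tfrac14$, yielding $\|\ka_\eta\|_\infty\le 5\|\ga_\eta\|_\infty$ after a separate treatment of $0\le s<\tfrac34$. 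You prove (b)$\Rightarrow$(d) directly from the definition \eqref{eq:beta_def}: restricting the integral to $[a,1)$ and using $1-a^2r^2\le 1-a^4=(1-a^2)(1+a^2)$ for $r\ge a$ gives $\ka_\eta(a)\le(1+a^2)^3\be_\eta(a)\le 8\|\be_\eta\|_\infty$; I have checked the computation and it is right. Your route is noticeably shorter --- it avoids the discrete optimization over $n$ and the case split --- and it yields the a priori stronger norm inequality $\|\ka_\eta\|_\infty\le 8\|\be_\eta\|_\infty$ (stronger in the sense that $\|\be_\eta\|_\infty\le\|\ga_\eta\|_\infty$); it also makes the cycle (b)$\Rightarrow$(d)$\Rightarrow$(c)$\Rightarrow$(b) self-contained at the level of $\be_\eta$, $\ga_\eta$, $\ka_\eta$, without any appeal to the operator $T_\mu$. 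What the paper's route buys in exchange is a direct quantitative comparison between $\ka_\eta$ and the eigenvalue sequence $\ga_\eta$ with the better constant $5$, obtained without reference to the Berezin transform, which is the form reused later (e.g.\ in Section~\ref{sec:Lip} boundedness of $\ka_\eta$ is the standing hypothesis).
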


The rest of this section is a proof of this theorem, divided in three lemmas.

\begin{lem}
\label{lem:trivial_comparison_be_ga_ka}
Let $\eta$ be a finite Borel measure on $[0,1)$.
Then,
\begin{equation}
\label{eq:trivial_comparison_be_ga_ka}
\|\be_\eta\|_\infty
\le \|\ga_\eta\|_\infty
\le \|\ka_\eta\|_\infty.
\end{equation}
\end{lem}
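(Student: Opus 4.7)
The plan is to derive both inequalities directly from the integral/series representations established in Propositions~\ref{prop:be_via_ga} and~\ref{prop:gamma_via_distribution_and_averages}, since $\eta$ is positive and hence $\ga_\eta(n)\ge 0$ and $\ka_\eta(r)\ge 0$, so no absolute-value acrobatics are needed.

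For the first inequality $\|\be_\eta\|_\infty\le\|\ga_\eta\|_\infty$, I would plug the uniform bound $\ga_\eta(n)\le\|\ga_\eta\|_\infty$ into the formula
\[
\be_\eta(a) = (1-a^2)^2 \sum_{n=0}^\infty (n+1)\,a^{2n}\,\ga_\eta(n)
\]
from Proposition~\ref{prop:be_via_ga}, and use the elementary identity $\sum_{n=0}^\infty (n+1)x^n = (1-x)^{-2}$ with $x=a^2\in[0,1)$. This yields $\be_\eta(a)\le\|\ga_\eta\|_\infty$ for every $a\in[0,1)$, hence $\|\be_\eta\|_\infty\le\|\ga_\eta\|_\infty$ (the bound is trivial in the case $\|\ga_\eta\|_\infty=+\infty$).

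For the second inequality $\|\ga_\eta\|_\infty\le\|\ka_\eta\|_\infty$, I would split by cases. For $n\ge 1$, formula~\eqref{eq:gamma_via_averages} in Proposition~\ref{prop:gamma_via_distribution_and_averages} gives
\[
\ga_\eta(n) = 2n(n+1)\int_0^1 \ka_\eta(r)\,r^{2n-1}(1-r^2)\,\dif r \le 2n(n+1)\,\|\ka_\eta\|_\infty\int_0^1 r^{2n-1}(1-r^2)\,\dif r,
\]
and the remaining beta-type integral evaluates to $\tfrac{1}{2n}-\tfrac{1}{2n+2}=\tfrac{1}{2n(n+1)}$, so $\ga_\eta(n)\le\|\ka_\eta\|_\infty$. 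For $n=0$, the prefactor $2n(n+1)$ vanishes while the integral diverges, so~\eqref{eq:gamma_via_averages} is not directly applicable; instead I would invoke~\eqref{eq:gamma_0_via_everage}, which asserts $\ga_\eta(0)=\ka_\eta(0)\le\|\ka_\eta\|_\infty$. Taking the supremum over $n\in\bNz$ closes the argument.

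There is really no serious obstacle here: both inequalities are one-line consequences of the representations already proved. The only subtle point, and the only reason for a separate remark in the write-up, is remembering to treat $n=0$ via~\eqref{eq:gamma_0_via_everage} in the second inequality, since the representation~\eqref{eq:gamma_via_averages} formally degenerates at that index.
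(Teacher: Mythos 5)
Your proposal is correct and follows essentially the same route as the paper: the first inequality via Proposition~\ref{prop:be_via_ga} and the geometric-type series $\sum_{n\ge0}(n+1)a^{2n}=(1-a^2)^{-2}$, the second via \eqref{eq:gamma_via_averages} for $n\ge1$ together with the evaluation $2n(n+1)\int_0^1 r^{2n-1}(1-r^2)\,\dif r=1$, and the separate appeal to \eqref{eq:gamma_0_via_everage} for $n=0$. Your explicit remark on why \eqref{eq:gamma_via_averages} degenerates at $n=0$ is a small clarity improvement over the paper's write-up, but the argument is the same.
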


\begin{proof}
1. For every $a$ in $[0,1)$,
by Proposition~\ref{prop:be_via_ga},
\[
\be_\eta(a)
\le
\|\ga_\eta\|_\infty\cdot
(1-a^2)^2
\sum_{n=0}^\infty (n+1) a^{2n}
=
\|\ga_\eta\|_\infty.
\]
Hence,
$\|\be_\eta\|_\infty\le \|\ga_\eta\|_\infty$.

\medskip\noindent
2. For every $n$ in $\bN$,
by~\eqref{eq:gamma_via_averages},
\[
\ga_\eta(n)
\le \|\ka_\eta\|_\infty
=2n(n+1) \int_0^1 r^{2n-1}(1-r^2)\,\dif{}r
=\|\ka_\eta\|_\infty.
\]
For $n=0$, by~\eqref{eq:gamma_0_via_everage},
\[
\ga_\eta(0)
= \ka_\eta(0)
\le \|\ka_\eta\|_\infty.
\]
Thus, $\|\ga_\eta\|_\infty \le \|\ka_\eta\|_\infty$.
\end{proof}

The next technical lemma provides a rough approximation
of the maximum value of the sequence
$n \mapsto (1-s^2)(n+1)s^{2n}$
which appears
in the proof of Lemma~\ref{lem:bound_ka_via_ga}.

\begin{lem}
\label{lem:m_s_inequality}
Let $\frac{3}{4}\le s\le 1$
and
\[
m \eqdef
\left\lfloor
\frac{1}{2(1-s)}
\right\rfloor.
\]
Then
\begin{equation}
\label{eq:m_s_expression}
(1-s^2) (m + 1) s^{2m} > \frac{1}{4}.
\end{equation}
\end{lem}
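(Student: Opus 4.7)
The proof plan is to reparametrize by $t \eqdef 1 - s$, so $t \in (0, 1/4]$, and to extract from the floor definition $m \le 1/(2t) < m+1$ the two ingredients $(m+1)\,t > 1/2$ and $2m \le 1/t$. I would then factor
\[
(1-s^2)(m+1)\,s^{2m}
= t\,(2-t)\,(m+1)\,(1-t)^{2m},
\]
and handle the polynomial factor $t(2-t)(m+1)$ and the exponential factor $(1-t)^{2m}$ separately.

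The polynomial factor is elementary: the first ingredient gives $(m+1)\,t > 1/2$, and the hypothesis $s \ge 3/4$ (i.e., $t \le 1/4$) gives $2-t \ge 7/4$, so $t\,(2-t)\,(m+1) > 7/8$.

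For the exponential factor, I would use $2m \le 1/t$ together with $0 < 1-t < 1$ to write $(1-t)^{2m} \ge (1-t)^{1/t}$, which reduces the problem to lower-bounding the single-variable function $h(t) \eqdef (1-t)^{1/t}$ on $(0, 1/4]$. The key step, and the only non-routine part of the argument, is to show that $h$ is monotonically decreasing on $(0,1)$. I would do this by taking logarithms: setting $g(t) \eqdef \ln(1-t)/t$, a short computation shows that the sign of $g'(t)$ is the sign of $\phi(t) \eqdef -t - (1-t)\ln(1-t)$, and $\phi(0) = 0$ together with $\phi'(t) = \ln(1-t) < 0$ forces $\phi(t) < 0$ on $(0,1)$. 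Hence $g$ is decreasing, $h$ is decreasing, and consequently $(1-t)^{1/t} \ge h(1/4) = (3/4)^4 = 81/256$ for $t \in (0, 1/4]$.

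Combining both estimates gives
\[
(1-s^2)(m+1)\,s^{2m}
> \frac{7}{8} \cdot \frac{81}{256}
= \frac{567}{2048}
> \frac{512}{2048}
= \frac{1}{4},
\]
which is exactly~\eqref{eq:m_s_expression}. The main (though very mild) obstacle is the monotonicity of $h$; the rest of the argument is bookkeeping with the floor inequality and with the assumption $s \ge 3/4$.
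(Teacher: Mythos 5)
Your proof is correct and follows essentially the same route as the paper's: both extract $(m+1)>\frac{1}{2(1-s)}$ and $2m\le\frac{1}{1-s}$ from the floor definition, reduce to the single-variable bound $\frac{1+s}{2}\,s^{1/(1-s)}$ (your $t(2-t)(m+1)(1-t)^{2m}$ with $t=1-s$ is the same decomposition), and invoke the monotonicity of $s\mapsto\ln(s)/(1-s)$, arriving at the identical constant $\frac{7}{8}\cdot\left(\frac{3}{4}\right)^4=\frac{567}{2048}>\frac14$. The only difference is that you supply the derivative computation for the monotonicity step, which the paper leaves as ``an elementary analysis.''
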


\begin{proof}
By the definition of the floor function
(the entire part of real numbers),
\[
m \le \frac{1}{2(1-s)}
\qquad\text{and}\qquad
m + 1 > \frac{1}{2(1-s)}.
\]
Let $X$ be the expression in the left-hand side of~\eqref{eq:m_s_expression}.
Then
\begin{align*}
X
&\ge
(1-s^2)\,\frac{1}{2(1-s)} s^{\frac{1}{1-s}}
\ge
\frac{s+1}{2} s^{\frac{1}{1-s}}.
\end{align*}
An elementary analysis shows that
$s \mapsto \ln(s)/(1-s)$
is a strictly increasing function on $(0,1)$.
Therefore,
\[
\frac{s+1}{2}\,s^{\frac{1}{1-s}}
\ge \frac{7}{8}\cdot
\left(\frac{3}{4}\right)^4
=\frac{567}{2048}
>\frac{1}{4}.
\qedhere
\]
\end{proof}

\begin{lem}
\label{lem:bound_ka_via_ga}
Let $\eta$ be a finite Borel measure on $[0,1)$.
Then,
\begin{equation}
\label{eq:bound_ka_via_ga}
\|\ka_\eta\|_\infty
\le 5 \|\ga_\eta\|_\infty.
\end{equation}
\end{lem}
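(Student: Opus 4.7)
The plan is to bound $\ka_\eta(r)$ pointwise by a constant multiple of $\ga_\eta(n)$ for a well-chosen index $n = n(r)$, and then take the supremum in $r$. The starting point is the elementary observation that we can shrink the integration domain in the definition of $\ga_\eta(n)$ from $[0,1)$ to $[r,1)$, where $s^{2n} \ge r^{2n}$:
\[
\ga_\eta(n)
\ge 2(n+1) \int_{[r,1)} s^{2n}\,\dif\eta(s)
\ge 2(n+1) r^{2n}\eta([r,1))
= (n+1)(1-r^2)r^{2n}\ka_\eta(r),
\]
where in the last step I used $\eta([r,1)) = \frac{1-r^2}{2}\ka_\eta(r)$, which is the defining identity of $\ka_\eta$. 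Thus $\ka_\eta(r) \le \ga_\eta(n)/[(n+1)(1-r^2)r^{2n}]$, and the problem reduces to choosing $n$, depending on $r$, so that the weight $(n+1)(1-r^2)r^{2n}$ is bounded below by a universal positive constant.

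I would split the range of $r$ into two pieces. For $0\le r < 3/4$, take $n=0$; then $1-r^2 > 7/16$, and using $\ga_\eta(0) = 2\eta([0,1))$ from~\eqref{eq:gamma_0_via_everage} the inequality above yields $\ka_\eta(r) \le \frac{16}{7}\ga_\eta(0)$. For $3/4 \le r < 1$, set $m \eqdef \lfloor 1/(2(1-r))\rfloor$ and apply Lemma~\ref{lem:m_s_inequality} with $s = r$, which gives exactly $(1-r^2)(m+1)r^{2m} > 1/4$, and hence $\ka_\eta(r) < 4\,\ga_\eta(m) \le 4\|\ga_\eta\|_\infty$.

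Combining the two cases yields $\|\ka_\eta\|_\infty \le \max\{16/7,\,4\}\,\|\ga_\eta\|_\infty = 4\|\ga_\eta\|_\infty \le 5\|\ga_\eta\|_\infty$, which is the stated estimate (with some slack to absorb the strict inequality in Lemma~\ref{lem:m_s_inequality}). The main technical point is handling $r$ near $1$: the weight $(n+1)(1-r^2)r^{2n}$ cannot be controlled uniformly by any fixed $n$, so one must let $n$ grow with $r$ at the precise rate $1/(1-r)$, which is exactly what the choice $m = \lfloor 1/(2(1-r))\rfloor$ achieves. Lemma~\ref{lem:m_s_inequality} does all the work of verifying that this rate is sharp enough to yield a uniform positive lower bound on the weight; without it, the argument would collapse.
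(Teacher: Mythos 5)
Your proof is correct, and its overall architecture coincides with the paper's: the same split of the range of $r$ at $3/4$, the same choice $m=\lfloor 1/(2(1-r))\rfloor$ in the boundary regime, and the same reliance on Lemma~\ref{lem:m_s_inequality} to bound the weight $(n+1)(1-r^2)r^{2n}$ from below by $1/4$. The one genuine difference is how you reach the pivotal inequality $\ga_\eta(n)\ge (n+1)(1-r^2)r^{2n}\ka_\eta(r)$: you obtain it in one line directly from the moment formula \eqref{eq:gamma}, by restricting the integral to $[r,1)$, using $s^{2n}\ge r^{2n}$ there, and invoking $2\eta([r,1))=(1-r^2)\ka_\eta(r)$. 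The paper derives the same inequality from the integration-by-parts representation \eqref{eq:gamma_via_averages}, via the pointwise comparison \eqref{eq:ka_eta_r_ka_eta_s} followed by integration over $[0,s)$ (note that the range ``$s\le r<1$'' stated there is a slip: the comparison $\ka_\eta(r)\ge\frac{1-s^2}{1-r^2}\ka_\eta(s)$ holds for $0\le r\le s$, consistent with the integral over $[0,s)$ that follows). Your derivation is shorter, needs only Definitions~\ref{def:gamma} and~\ref{DefAver} rather than Proposition~\ref{prop:gamma_via_distribution_and_averages}, and sidesteps that slip; what the paper's route buys is uniformity with the averages formalism of Section~\ref{sec:AF}, on which the rest of the argument in Sections~\ref{sec:radial_measure_criterion_Carleson} and~\ref{sec:Lip} is built. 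Both arguments produce the constant $4$ in the main regime and $16/7$ near the origin, so the stated bound $5$ holds with room to spare.
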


\begin{proof}
Let $s$ be a fixed number such that $s\in[0,1)$.
We are going to prove that
$\ka_\eta(s)\le 5\|\ga_\eta\|_\infty$.
For $0\le s\le \frac{3}{4}$,
the situation is simple.
By~\eqref{eq:gamma_0_via_everage},
\[
\ka_\eta(s)
= \frac{2\eta([s,1))}{1-s^2}
\le \frac{16}{7}\cdot 2\eta([0,1))
\le 5\ga_\eta(0)
\le 5\|\ga_\eta\|_\infty.
\]
Consider the main case:
$\frac{3}{4}\le s<1$.
For every $r$ with $s\le r<1$,
we get the following lower estimate
of $\ka_\eta(r)$
in terms of $\ka_\eta(s)$:
\begin{equation}
\label{eq:ka_eta_r_ka_eta_s}
\ka_\eta(r)
=\frac{2\eta([r,1))}{1-r^2}
\ge \frac{2\eta([s,1))}{1-r^2}
=
\frac{1-s^2}{1-r^2}
\cdot
\frac{2\eta([s,1))}{1-s^2}
= \frac{1-s^2}{1-r^2}\,\ka_\eta(s).
\end{equation}
For every $n$ in $\bN$,
we apply~\eqref{eq:gamma_via_averages}
and~\eqref{eq:ka_eta_r_ka_eta_s}.
Thereby, we estimate $\ga_\eta(n)$
from below in terms of $\ka_\eta$:
\begin{align*}
\ga_\eta(n)
&\ge
2n(n+1)\int_{[0,s)} \ka_\eta(r) r^{2n-1}(1-r^2)\,\dif{}r
\\
&\ge 2n(n+1)
\int_{[0,s)}\ka_\eta(s) (1-s^2)r^{2n-1}\,\dif{}r
= (1-s^2) (n+1) s^{2n}\,\ka_\eta(s).
\end{align*}
Recall that $s$ is a fixed number.
We choose $m$ as in Lemma~\ref{lem:m_s_inequality} and get
\[
\ga_\eta(m)
\ge \frac{1}{4} \ka_\eta(s).
\]
Thus, $\ka_\eta(s)\le 4\ga_\eta(m) \le 4\,\|\ga_\eta\|_\infty$.
Joining the cases $s<\frac{3}{4}$ and $s\ge\frac{3}{4}$,
we obtain
$\ka_\eta(s)\le 5\|\ga_\eta\|_\infty$.
\end{proof}

\begin{proof}[Proof of Theorem~\ref{thm:criterion_radial_Carleson_Bergman}]
By Proposition~\ref{prop:Berezin_transform_of_radial_measure},
$\|\Ber_{\nu_\eta}\|_\infty = \|\be_\eta\|_\infty$.
So, by the general criterion,
mentioned in the beginning of this section,
conditions (a) and (b) are equivalent.
Lemmas~\ref{lem:trivial_comparison_be_ga_ka}
and~\ref{lem:bound_ka_via_ga}
show the equivalence of (b), (c) and (d).
\end{proof}

\section{Lipschitz property of the spectral sequences}
\label{sec:Lip}

Now, we study the behavior of the spectral sequence $\ga_\eta$.
Following~\cite{GrudskyMaximenkoVasilevski2013}, we consider the following ``logarithmic distance'' on $\bNz$:
\[
d_{\log}(m,n)
\eqdef
\bigl|\log(m+1)-\log(n+1)\bigr|.
\]
It was proved in~\cite{Suarez2008}
and~\cite{GrudskyMaximenkoVasilevski2013}
that if $a\in L_\infty([0,1))$,
then $\ga_a$ is Lipschitz continuous with respect to $d_{\log}$.
In this section, we prove that this property extends to $\ga_\eta$, supposing that $\eta$ is a (positive) Borel measure on $[0,1)$ such that $\ka_\eta$ is bounded.

We denote by $L$ the ``integral kernel'' in the formula~\eqref{eq:gamma_via_averages}:
\[
L(n,r) \eqdef 2n(n+1) r^{2n-1} (1-r^2).
\]

\begin{lem}
\label{lem:auxiliary_integral_for_lip}
For every $n$ in $\bN$,
\begin{equation}
\label{eq:integral_abs_dif_L_minus_L_next}
\int_0^1
\bigl|L(n+1,r)-L(n,r)\bigr|\,\dif{}r
=\frac{8(n+1)n^n}{(n+2)^{n+2}}.
\end{equation}
\end{lem}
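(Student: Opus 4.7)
The plan is to factor the integrand $L(n+1,r) - L(n,r)$ algebraically in order to pinpoint its sign, and then exploit a simple symmetry that reduces the absolute-value integral to an ordinary one with a closed antiderivative.

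First I would factor the difference. Writing $L(n,r) = 2n(n+1)\bigl(r^{2n-1}-r^{2n+1}\bigr)$, a short calculation collects the two terms as
\[
L(n+1,r) - L(n,r) = 2(n+1)\,r^{2n-1}\,(1-r^2)\bigl((n+2)r^2 - n\bigr).
\]
On $(0,1)$ the sign is governed entirely by the quadratic factor $(n+2)r^2-n$, so the difference is negative on $(0,r_0)$ and positive on $(r_0,1)$, where $r_0 \eqdef \sqrt{n/(n+2)}$ is the unique sign change.

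Next I would invoke a normalization identity. Since $\ga_{\eta_0}(n)=1$ for every $n\in\bN$ (computed in the example following Proposition~\ref{prop:gamma_via_distribution_and_averages}), one has $\int_0^1 L(n,r)\,\dif{}r = 1$ for every $n\in\bN$, and hence
\[
\int_0^1 \bigl(L(n+1,r)-L(n,r)\bigr)\,\dif{}r = 0.
\]
The positive and negative parts of $L(n+1,\cdot)-L(n,\cdot)$ therefore have equal integrals in modulus, so
\[
\int_0^1 |L(n+1,r)-L(n,r)|\,\dif{}r
= 2\int_{r_0}^1 \bigl(L(n+1,r)-L(n,r)\bigr)\,\dif{}r.
\]

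Finally, the antiderivative of $L(n,\cdot)$ is the explicit polynomial $M(n,r)\eqdef (n+1)r^{2n} - n\,r^{2n+2}$, which satisfies $M(n,1)=1$. Thus the remaining integral equals $M(n,r_0) - M(n+1,r_0)$. Substituting $r_0^2 = n/(n+2)$, both evaluations take the form $n^n/(n+2)^{n+2}$ times a linear combination of $(n+2)$ and $n$; the combined numerator $(3n+2)(n+2) - n(3n+4)$ collapses to $4(n+1)$, giving $M(n,r_0)-M(n+1,r_0) = 4(n+1)n^n/(n+2)^{n+2}$. Doubling yields \eqref{eq:integral_abs_dif_L_minus_L_next}. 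The only obstacle is the clerical bookkeeping in this last step; no new idea is needed beyond the sign analysis and the normalization $\int_0^1 L(n,r)\,\dif{}r = 1$.
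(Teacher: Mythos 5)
Your proposal is correct and follows essentially the same route as the paper: identify the single sign change at $r_0=\sqrt{n/(n+2)}$, use the antiderivative $(n+1)r^{2n}-nr^{2n+2}$ (whose values at $0$ and $1$ are $0$ and $1$, i.e.\ your normalization $\int_0^1 L(n,r)\,\dif r=1$), and reduce the absolute-value integral to twice the difference of the antiderivatives evaluated at $r_0$. The final simplification to $8(n+1)n^n/(n+2)^{n+2}$ checks out.
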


\begin{proof}
Let $n\in\bN$.
We denote by $J_n$ the integral in the left-hand side of~\eqref{eq:integral_abs_dif_L_minus_L_next}.
First, we notice that
the expression $L(n+1,r)-L(n,r)$
is negative on $(0,r_0)$ and positive on $(r_0,1)$,
where
\[
r_0 \eqdef \sqrt{\frac{n}{n+2}}.
\]
Therefore, $J_n$
simplifies in the following manner:
\[
J_n
=
\int_0^{r_0} (L(n,r)- L(n+1,r))\,\dif{}r
+ \int_{r_0}^1 (L(n+1,r) - L(n,r))\,\dif{}r.
\]
Consider the following antiderivative of $L(n,\cdot)$:
\begin{align*}
\widetilde{L}(n,x)
&\eqdef \int_0^x L(n,r)\,\dif{}r
=2n(n+1) \int_0^x (r^{2n-1} - r^{2n+1})\,\dif{}r
\\[0.5ex]
&= (n+1)x^{2n} - nx^{2n+2}.
\end{align*}
Since $\widetilde{L}(n,0)=\widetilde{L}(n+1,0)=0$
and $\widetilde{L}(n,1)=\widetilde{L}(n+1,1)=1$,
\begin{align*}
J_n
&= 2(\widetilde{L}(n,r_0)-\widetilde{L}(n+1,r_0))
\\[0.5ex]
&=2(n+1)2(n+1)(1-2r_0^2+r_0^4)r_0^{2n}
\\[0.5ex]
&=2(n+1)\left(1-\frac{n}{n+2}\right)^2\,\left(\frac{n}{n+2}\right)^n.
\end{align*}
After a simplification,
we obtain the right-hand side of~\eqref{eq:integral_abs_dif_L_minus_L_next}.
\end{proof}

The following lemma is auxiliary.

\begin{lem}
For each $n$ in $\bN$,
\[
\frac{1}{m+1} \le \log(m+1) - \log(m).
\]
\end{lem}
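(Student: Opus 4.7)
The plan is to prove the inequality by writing the difference $\log(m+1)-\log(m)$ as an integral of $1/t$ and bounding the integrand from below by its value at the right endpoint. (Note that the statement says ``for each $n$ in $\bN$'' but the variable in the inequality is $m$; I treat this as a minor typo and prove the inequality for every $m\in\bN$.)

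First, I would use the fundamental theorem of calculus to write
\[
\log(m+1)-\log(m)
=\int_m^{m+1} \frac{\dif{}t}{t}.
\]
Then, since the integrand $1/t$ is decreasing on the positive reals, for every $t\in[m,m+1]$ we have $1/t\ge 1/(m+1)$. Integrating this pointwise inequality over the interval $[m,m+1]$ of length one yields
\[
\int_m^{m+1}\frac{\dif{}t}{t}\ge \int_m^{m+1}\frac{\dif{}t}{m+1}=\frac{1}{m+1},
\]
which combined with the previous identity gives the desired bound.

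There is essentially no main obstacle here: this is a one-line monotonicity argument. An equivalent route would be to apply the mean value theorem, which gives $\log(m+1)-\log(m)=1/\xi$ for some $\xi\in(m,m+1)$, and then to use $1/\xi>1/(m+1)$; but the direct integral comparison is the most transparent and does not introduce an unnamed intermediate point.
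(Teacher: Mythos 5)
Your argument is correct and is essentially identical to the paper's own proof, which also writes $\log(m+1)-\log(m)=\int_m^{m+1}\frac{\dif{}x}{x}$ and bounds the integrand below by $\frac{1}{m+1}$. You are also right that the quantifier ``for each $n$'' is a typo for $m$.
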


\begin{proof}
For each $x$ in $[m,m+1]$, we have
$\frac{1}{m+1}\le \frac{1}{x}$.
Therefore,
\[
\frac{1}{m+1}
\le \int_m^{m+1} \frac{\dif{}x}{x}
= \log(m+1) - \log(m).
\qedhere
\]
\end{proof}

The have now the main result of this section.

\begin{thm}
\label{thm:Lip}
Let $\eta$ be a Borel measure on $[0,1)$
such that $\ka_\eta$ is a bounded function.
Then, $\ga_\eta$ is Lipschitz continuous
with respect to $d_{\log}$.
\end{thm}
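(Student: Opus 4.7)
The plan is to reduce the claim to a pointwise bound on consecutive differences $\ga_\eta(n+1)-\ga_\eta(n)$ and then telescope over a range of indices. The starting point is formula~\eqref{eq:gamma_via_averages}, which expresses $\ga_\eta(n)$ as an integral of $\ka_\eta$ against the kernel $L(n,\cdot)$. Since $\ka_\eta$ lies in $L_\infty$ by hypothesis, I would pull out the supremum to bound $|\ga_\eta(n+1) - \ga_\eta(n)|$ by $\|\ka_\eta\|_\infty$ times the $L^1$ norm of $L(n+1,\cdot) - L(n,\cdot)$, a quantity Lemma~\ref{lem:auxiliary_integral_for_lip} was designed to compute.

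Next, I would observe that the explicit expression $\frac{8(n+1)n^n}{(n+2)^{n+2}}$ from Lemma~\ref{lem:auxiliary_integral_for_lip} decays like $1/n$; this reduces to the trivial estimate $(n/(n+2))^n \le 1$, yielding $|\ga_\eta(n+1)-\ga_\eta(n)| \le 8\|\ka_\eta\|_\infty/(n+2)$ for every $n \in \bN$. To pass from this pointwise bound to Lipschitz continuity in $d_{\log}$, I would telescope: for integers $1 \le m < n$ one has
\[
|\ga_\eta(n)-\ga_\eta(m)| \le 8\|\ka_\eta\|_\infty \sum_{k=m}^{n-1}\frac{1}{k+2},
\]
and the auxiliary lemma preceding Theorem~\ref{thm:Lip}, applied with $k+1$ in place of $m$, upgrades each term $\frac{1}{k+2}$ to the logarithmic increment $\log(k+2)-\log(k+1)$, so the sum collapses precisely to $d_{\log}(m,n)$.

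The main obstacle is the corner case $m=0$, where formula~\eqref{eq:gamma_via_averages} degenerates (its kernel vanishes at $n=0$) and one must invoke the identity $\ga_\eta(0) = \ka_\eta(0)$ from~\eqref{eq:gamma_0_via_everage} separately. I would handle this by splitting
\[
|\ga_\eta(n)-\ga_\eta(0)| \le |\ga_\eta(n)-\ga_\eta(1)| + |\ga_\eta(1)-\ga_\eta(0)|;
\]
the first summand is controlled by the telescoping estimate at $m=1$, and the second is at most $2\|\ga_\eta\|_\infty \le 2\|\ka_\eta\|_\infty$ by Lemma~\ref{lem:trivial_comparison_be_ga_ka}. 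Since $d_{\log}(0,n) \ge \log 2$ whenever $n \ge 1$, this constant term can be absorbed into the Lipschitz bound at the cost of enlarging the constant by an additive $2/\log 2$. The resulting Lipschitz constant is thus a small explicit multiple of $\|\ka_\eta\|_\infty$.
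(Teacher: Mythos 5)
Your proposal is correct and follows essentially the same route as the paper's proof: bound the consecutive differences $|\ga_\eta(n+1)-\ga_\eta(n)|$ via Lemma~\ref{lem:auxiliary_integral_for_lip} and the estimate $\frac{8(n+1)n^n}{(n+2)^{n+2}}\le\frac{8}{n+2}$, convert $\frac{1}{n+2}$ into a logarithmic increment, and telescope. The only (immaterial) difference is the treatment of the index $0$: the paper bounds $|\ga_\eta(1)-\ga_\eta(0)|\le 2\|\ka_\eta\|_\infty\le 8\|\ka_\eta\|_\infty\log 2$ directly so that the single constant $8$ survives the telescoping, whereas you absorb that term separately at the cost of an extra additive $2/\log 2$ in the Lipschitz constant.
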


\begin{proof}
First, we are going to bound from above the difference $|\ga_\eta(n+1)-\ga_\eta(n)|$.
For each $n$ in $\bN$,
\begin{align*}
|\ga_\eta(n+1)-\ga_\eta(n)|
&\le \int_0^1 |L(n+1,r)-L(n,r)|\,
\ka_\eta(r)\,\dif{}r
\\[0.5ex]
&\le \frac{8(n+1)n^n}{(n+2)^{n+2}} \|\ka_\eta\|_\infty
\le \frac{8\|\ka_\eta\|_\infty}{n+2}
\\[0.5ex]
&\le 8\|\ka_\eta\|_\infty
\bigl(\log(n+2) - \log(n+1)\bigr).
\end{align*}
For $n=0$, we obtain a similar estimate in a more simple way:
\begin{align*}
|\ga_\eta(1)-\ga_\eta(0)|
&
\le \ga_\eta(0) + \ga_\eta(1)
=\ka_\eta(0)
+ 4 \int_0^1 \ka_\eta(r)\,r(1-r^2)\,\dif{}r
\\
&
\le 2\|\ka_\eta\|_\infty
\le 8\|\ka_\eta\|_\infty (\log(2)-\log(1)).
\end{align*}
So, for every $m,n$ in $\bNz$ with $m<n$, we have
\begin{align*}
|\ga_\eta(m)-\ga_\eta(n)|
&\le
\sum_{k=n}^{m-1} |\ga_\eta(k+1)-\ga_\eta(k)|
\\[0.5ex]
&\le
8\|\ga_\eta\|_\infty (\log(m+1)-\log(n+1))
\\[0.5ex]
&=
8\|\ga_\eta\|_\infty d_{\log}(m,n).
\qedhere
\end{align*}
\end{proof}

\section{A class of radial complex measures}
\label{sec:class_of_radial_complex_measures}

In this section,
we propose a trivial extension
of some of the previous results
to radial complex measures.

\begin{defn}
\label{def:class_of_complex_measures}
We denote by $\mathfrak{M}_{\bC}$
the set of all finite complex Borel measures
$\eta$ on $[0,1)$ of the form
\begin{equation}
\label{eq:eta_complex}
\eta = \eta_1 - \eta_2 + \iu\,(\eta_3 - \eta_4),
\end{equation}
where $\eta_j$ are (positive) finite Borel measures on $[0,1)$ such that the functions $\ka_{\eta_j}$ are bounded for each $j$ in $\{1,2,3,4\}$.
\end{defn}

Given $\eta$ in $\mathfrak{M}_{\bC}$,
we define
$\nu_\eta$, $T_{\nu_\eta}$,
$\ga_\eta$ and $\be_\eta$
by the same formulas are before.

\begin{cor}
\label{cor:complex_case}
Let $\eta\in\mathfrak{M}_{\bC}$.
Then the following properties hold.
\begin{enumerate}
\item $\nu_\eta$ is a radial measure.
\item $T_{\nu_\eta}$ is a bounded radial operator,
and $R T_{\nu_\eta} R^\ast = M_{\ga_\eta}$.
\item $\ga_\eta$ is a bounded sequence.
\item $\ka_\eta$ and $\be_\eta$ are bounded functions.
\item $\ga_\eta$ is Lipschitz continuous with respect to $\rho$.
\end{enumerate}
\end{cor}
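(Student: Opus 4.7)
The plan is to reduce every assertion to the positive case already established, using the linear dependence of $\nu_\eta$, $T_{\nu_\eta}$, $\ga_\eta$, $\be_\eta$ and $\ka_\eta$ on $\eta$. Writing $\eta = \eta_1 - \eta_2 + \iu(\eta_3 - \eta_4)$ as in Definition~\ref{def:class_of_complex_measures}, the pushforward formula~\eqref{eq:radial_measure_extension} gives
\[
\nu_\eta = \nu_{\eta_1} - \nu_{\eta_2} + \iu(\nu_{\eta_3} - \nu_{\eta_4}),
\]
and since each $\nu_{\eta_j}$ is radial by Theorem~\ref{ThmERM} (and invariance under rotations is preserved by complex linear combinations), statement (1) is immediate.

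For (2), the hypothesis that each $\ka_{\eta_j}$ is bounded allows me to invoke Theorem~\ref{thm:criterion_radial_Carleson_Bergman}, which guarantees that each $\nu_{\eta_j}$ is a Carleson--Bergman measure and hence $T_{\nu_{\eta_j}}$ is a bounded radial operator with $R T_{\nu_{\eta_j}} R^\ast = M_{\ga_{\eta_j}}$ (Theorem~\ref{ThmDiagonal}). Since the defining formula~\eqref{EqTmu} is linear in $\mu$, summing yields $T_{\nu_\eta} = T_{\nu_{\eta_1}} - T_{\nu_{\eta_2}} + \iu(T_{\nu_{\eta_3}} - T_{\nu_{\eta_4}})$, which is bounded and radial, and $R T_{\nu_\eta} R^\ast = M_{\ga_\eta}$ with
\[
\ga_\eta = \ga_{\eta_1} - \ga_{\eta_2} + \iu(\ga_{\eta_3} - \ga_{\eta_4}).
\]
Statements (3) and (4) then follow from Lemma~\ref{lem:trivial_comparison_be_ga_ka}: for each $j$, $\|\be_{\eta_j}\|_\infty \le \|\ga_{\eta_j}\|_\infty \le \|\ka_{\eta_j}\|_\infty < \infty$, and the same linear decomposition for $\ka_\eta$ (which also depends linearly on $\eta$ via its definition) combined with the triangle inequality bounds $\|\ga_\eta\|_\infty$, $\|\be_\eta\|_\infty$, $\|\ka_\eta\|_\infty$ by the sums of the corresponding norms for the $\eta_j$.

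Finally, for (5), I apply Theorem~\ref{thm:Lip} to each $\eta_j$ separately, obtaining a Lipschitz constant $C_j = 8\|\ka_{\eta_j}\|_\infty$ with respect to $d_{\log}$; the triangle inequality applied to the decomposition of $\ga_\eta$ then gives Lipschitz continuity of $\ga_\eta$ with constant $C_1 + C_2 + C_3 + C_4$. There is no real obstacle here --- the only point that needs a moment of care is that $\ka_\eta$, $\ga_\eta$, $\be_\eta$ are defined by formulas that are manifestly linear in the measure $\eta$, so the decomposition of $\eta$ transfers verbatim to each of these objects, and the proof is essentially bookkeeping on top of the results already proved for positive measures.
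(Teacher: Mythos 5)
Your proof is correct and follows essentially the same route as the paper: decompose $\eta$ into four positive measures as in Definition~\ref{def:class_of_complex_measures}, apply Theorems~\ref{ThmERM}, \ref{ThmDiagonal}, \ref{thm:criterion_radial_Carleson_Bergman} and \ref{thm:Lip} to each piece, and transfer everything back by the linearity of $\nu_\eta$, $T_{\nu_\eta}$, $\ga_\eta$, $\be_\eta$ and $\ka_\eta$ in $\eta$ together with the triangle inequality. You simply spell out the bookkeeping that the paper leaves implicit (and correctly read the $\rho$ in item (5) as $d_{\log}$), so there is nothing to add.
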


\begin{proof}
This is a corollary of Theorems~\ref{ThmERM},
\ref{ThmDiagonal},
\ref{thm:criterion_radial_Carleson_Bergman},
and \ref{thm:Lip}.
Indeed, if $\eta$ is of the form~\eqref{eq:eta_complex}
from Definition~\ref{def:class_of_complex_measures},
then it is easy to see that
\[
\nu_\eta = \nu_{\eta_1}-\nu_{\eta_2}+\iu\,(\nu_{\eta_3}-\nu_{\eta_3}),
\quad
T_{\nu_\eta} = T_{\nu_{\eta_1}}-T_{\nu_{\eta_2}}
+\iu\,(T_{\nu_{\eta_3}} - T_{\nu_{\eta_3}}),
\]
and similar decompositions also holds for $\ga_\eta$, $\be_\eta$, and $\ka_\eta$.
\end{proof}

Let us denote by
$\operatorname{BLip}(\bNz,d_{\log})$
the set of all bounded complex sequences that are Lipschitz continuous with respect to $d_{\log}$,
and by
$\operatorname{BUC}(\bNz,d_{\log})$ the set of all bounded complex sequences that are uniformly continuous with respect to $d_{\log}$.
These sequence classes (in another notation)
were introduced in~\cite{GrudskyMaximenkoVasilevski2013}.

\begin{cor}
\label{cor:gammas_induced_by_a_class_of_complex_radial_measures}
The set of sequences
$\{\ga_\eta\colon\ \eta\in\mathfrak{M}_\bC\}$
is a dense subset of
$\operatorname{BUC}(\bNz,d_{\log})$.
The C*-subalgebra of $l_\infty(\bNz)$
generated
by $\{\ga_\eta\colon\ \eta\in\mathfrak{M}_\bC\}$
coincides with
$\operatorname{BUC}(\bNz,d_{\log})$.
\end{cor}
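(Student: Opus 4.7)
The plan is to reduce the corollary to the analogous statement already proven in the literature for the classical eigenvalue sequences $\{\ga_a : a\in L_\infty([0,1))\}$, by showing that this family is contained in $\{\ga_\eta : \eta\in\mathfrak{M}_{\bC}\}$. From \cite{Suarez2008,GrudskyMaximenkoVasilevski2013,BauerHerreraYanezVasilevski2014,HerreraMaximenkoVasilevski2015} (cited in the introduction) we know that $\{\ga_a : a\in L_\infty([0,1))\}$ is dense in $\operatorname{BUC}(\bNz,d_{\log})$ and that its generated C*-subalgebra of $l_\infty(\bNz)$ equals $\operatorname{BUC}(\bNz,d_{\log})$.

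First, I will observe the inclusion
\[
\{\ga_a : a\in L_\infty([0,1))\}
\subseteq
\{\ga_\eta : \eta\in\mathfrak{M}_{\bC}\}.
\]
Given $a\in L_\infty([0,1))$, I associate to it the complex Borel measure $\eta_a$ on $[0,1)$ defined by $\dif\eta_a(r)\eqdef a(r)\,r\,\dif{}r$. Splitting $a$ into real and imaginary parts and then each of these into positive and negative parts, I obtain a decomposition $\eta_a = \eta_1-\eta_2+\iu(\eta_3-\eta_4)$ with each $\eta_j$ a positive finite Borel measure whose density with respect to $\dif\eta_0(r)=r\,\dif r$ is essentially bounded by $\|a\|_\infty$. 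For each such $\eta_j$ a direct estimate gives
\[
\ka_{\eta_j}(r)
=\frac{2\eta_j([r,1))}{1-r^2}
\le \frac{2\|a\|_\infty \int_r^1 s\,\dif s}{1-r^2}
=\|a\|_\infty,
\]
so $\ka_{\eta_j}$ is bounded and thus $\eta_a\in\mathfrak{M}_{\bC}$. A quick substitution in \eqref{eq:gamma} shows that $\ga_{\eta_a}(k)=2(k+1)\int_{[0,1)} a(r) r^{2k+1}\,\dif r = \ga_a(k)$, giving the claimed inclusion.

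Next, Corollary~\ref{cor:complex_case}(5) together with Theorem~\ref{thm:Lip} yields the reverse-type inclusion
\[
\{\ga_\eta : \eta\in\mathfrak{M}_{\bC}\}
\subseteq \operatorname{BLip}(\bNz,d_{\log})
\subseteq \operatorname{BUC}(\bNz,d_{\log}).
\]
Combining this with the previous step produces the chain
\[
\{\ga_a : a\in L_\infty([0,1))\}
\subseteq
\{\ga_\eta : \eta\in\mathfrak{M}_{\bC}\}
\subseteq
\operatorname{BUC}(\bNz,d_{\log}).
\]

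From here both conclusions are immediate. For density, the smaller family on the left is already dense in $\operatorname{BUC}(\bNz,d_{\log})$ by the cited results, hence so is the middle one. For the C*-algebra statement, let $\mathcal{A}_1$ and $\mathcal{A}_2$ denote the C*-subalgebras of $l_\infty(\bNz)$ generated by the left and middle sets respectively; then $\mathcal{A}_1\subseteq\mathcal{A}_2\subseteq\operatorname{BUC}(\bNz,d_{\log})$, and since $\mathcal{A}_1=\operatorname{BUC}(\bNz,d_{\log})$ by the cited results, we conclude $\mathcal{A}_2=\operatorname{BUC}(\bNz,d_{\log})$. The only real work is the verification that $\ka_{\eta_j}$ is bounded in the decomposition step above; the rest is bookkeeping and appealing to prior theorems, so no substantial obstacle is anticipated.
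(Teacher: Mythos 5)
Your proposal is correct and follows essentially the same route as the paper: embed $\{\ga_a\colon a\in L_\infty([0,1))\}$ into $\{\ga_\eta\colon \eta\in\mathfrak{M}_\bC\}$ via the measure $\dif\eta_a(r)=a(r)\,r\,\dif r$ decomposed into four positive parts, chain the inclusions through $\operatorname{BLip}(\bNz,d_{\log})\subseteq\operatorname{BUC}(\bNz,d_{\log})$, and invoke the known density of $\{\ga_a\}$ together with the fact that $\operatorname{BUC}(\bNz,d_{\log})$ is already a C*-subalgebra. Your explicit verification that $\ka_{\eta_j}\le\|a\|_\infty$ is a welcome detail the paper only asserts.
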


\begin{proof}
In the context of general metric spaces,
it is obvious and well known
that every Lipschitz continous function
is uniformly continuous.
Therefore,
$\operatorname{BLip}(\bNz,d_{\log})
\subseteq
\operatorname{BUC}(\bNz,d_{\log})$.

It was proved in~\cite{GrudskyMaximenkoVasilevski2013}
(see also~\cite{Suarez2008,
BauerHerreraYanezVasilevski2014,
HerreraMaximenkoVasilevski2015})
that $\{\ga_a\colon\ a\in L_\infty([0,1))\}$
is a dense subset of
$\operatorname{BUC}(\bNz,d_{\log})$.

For every $a$ in $L^\infty([0,1))$,
the measure $\mu$ defined by
$\dif\mu(z)=a(z)\,\dif\lambda(z)$
belongs to the class $\mathfrak{M}_\bC$;
this follows from the decomposition of $a$
into a linear combination of four positive functions.
Therefore, the following inclusions hold:
\[
\{\ga_a\colon\ a\in L_\infty([0,1))\}
\subseteq
\{\ga_\eta\colon\ \eta\in\mathfrak{M}_\bC\}
\subseteq
\operatorname{BLip}(\bNz,d_{\log})
\subseteq
\operatorname{BUC}(\bNz,d_{\log}).
\]
Since
$\{\ga_a\colon\ a\in L_\infty([0,1))\}$
is a dense subset of
$\operatorname{BUC}(\bNz,d_{\log})$,
we conclude that
$\{\ga_\eta\colon\ \eta\in\mathfrak{M}_\bC\}$
also is a dense subset of
$\operatorname{BUC}(\bNz,d_{\log})$.
Moreover,
since $\operatorname{BUC}(\bNz,d_{\log})$
is a C*-subalgebra of $l_\infty(\bNz)$,
it is exactly the C*-subalgebra
generated by
$\operatorname{BUC}(\bNz,d_{\log})$.
\end{proof}

Probably, it is possible to define bounded radial Toeplitz operators $T_{\nu_\eta}$
for complex measures $\eta$
from a wider class than $\mathfrak{M}_{\bC}$.
It is an interesting theme for a future research.


\subsection*{Funding}

The research of the first author has been supported by
SECIHTI, Mexico,
project ``Ciencia de Frontera'' FORDECYT-PRONACES/61517/2020,
and by Instituto Polit\'{e}cnico Nacional,
Mexico (Proyectos IPN-SIP).

\subsection*{Acknowledgement}

The first author is grateful to Ondrej Hutn\'{i}k and Crispin Herrera-Ya\~{n}ez for discussing ideas about ``vertical'' Bergman--Carleson measures (that project is still unfinished).

\end{document}